\newcommand{\be}{\begin{equation}}
\newcommand{\ee}{\end{equation}}
\newcommand{\beq}{\begin{eqnarray}}
\newcommand{\eeq}{\end{eqnarray}}
\newtheorem{thm}{Theorem}[section]
\newtheorem{lma}{Lemma}[section]
\newtheorem{prop}{Proposition}[section]
\newtheorem{cor}{Corollary}[section]
\theoremstyle{remark}
\newtheorem{rem}{Remark}[section]
\numberwithin{equation}{section}
\def\be{\begin{equation}}
\def\ee{\end{equation}}
\def\bee{\begin{equation*}}
\def\eee{\end{equation*}}
\def\lf{\left}
\def\ri{\right}
\def\red{\color{red}}
\def\green{\color{green}}
\def\blue{\color{blue}}
\def\Ric{\text{\rm Ric}}
\def\Rm{\text{\rm Rm}}
\def\la{\langle}
\def\ra{\rangle}
\def\p{\partial}
\def\heat{\lf(\frac{\p}{\p t}-\Delta\ri)}
\def\e{\varepsilon}
\def\a{{\alpha}}
\def\b{{\beta}}
\def\R{\mathbb{R}}
\begin{document}

\title[]
{Small curvature concentration and Ricci flow smoothing}

 \author{Pak-Yeung Chan}
\address[Pak-Yeung Chan]{Department of Mathematics,
University of California, San Diego,
La Jolla, CA 92093}
\email{pachan@ucsd.edu}

 \author{Eric Chen}
\address[Eric Chen]{
Department of Mathematics,
University of California,
Santa Barbara CA 93106-3080, USA}
\curraddr{Department of Mathematics, University of California, Berkeley CA 94720-3840, USA}
\email{ecc@berkeley.edu}

 \author{Man-Chun Lee}
\address[Man-Chun Lee]{Department of Mathematics, Northwestern University, 2033 Sheridan Road, Evanston, IL 60208; Mathematics Institute, Zeeman Building,
University of Warwick, Coventry CV4 7AL}
\curraddr{Department of Mathematics, The Chinese University of Hong
Kong, Shatin, Hong Kong, China}
\email{mclee@math.cuhk.edu.hk}

\renewcommand{\subjclassname}{
  \textup{2010} Mathematics Subject Classification}
\subjclass[2010]{Primary 53C44
}

\date{\today}

\begin{abstract}
We show that a complete Ricci flow of bounded curvature which begins from a manifold with a Ricci lower bound, local entropy bound, and small local scale-invariant integral curvature control will have global point-wise curvature control at positive times which only depends on the initial almost Euclidean structure. As applications, we use the Ricci flows to study the diffeomorphism type of manifolds and the regularity of Gromov-Hausdorff limit of manifolds with small curvature concentration. 

\end{abstract}

\keywords{Ricci flows, pseudolocality, Gap theorems}

\maketitle

\markboth{Pak-Yeung Chan, Eric Chen and Man-Chun Lee}{}

\section{Introduction}

The Ricci flow deforms a metric $g$ on a Riemannian manifold $(M^n,g)$ according to the equation
\begin{align*}
    \frac{\partial}{\partial t}g(t)=-2\Ric_{g(t)}.
\end{align*}
Since its introduction by Hamilton \cite{Hamilton-1982}, the Ricci flow has been used in a wide variety of settings to regularize metrics. One sense in which this occurs is described by Perelman's pseudolocality theorem \cite{Perelman-2002}, which has played a crucial role in work on the short time existence of the Ricci flow, especially in settings where the initial data lacks bounded curvature or completeness \cite{Topping2010, He2016}.
\begin{thm} \cite[Theorem 10.1]{Perelman-2002}\label{Perelman} For any $\a>0$, there exist  positive constant $\e$ and $\delta$ such that if ($M^n, g(t))$, $t$ $\in [0,\e r_0]$  is a solution to the Ricci flow for some $r_0>0$ and in addition 
\begin{itemize}
\item $R\geq -r_0^{-2}$ on $B_0(x_0, r_0)$;
\item $|\partial \Omega|^n \geq (1-\delta) c_n |\Omega|^{n-1}$, for any open set in $B_0(x_0, r_0)$, where $c_n$ is the isoperimetric constant of $\R^n$,
\end{itemize}
then for any $t$ $\in [0, (\e r_0)^2]$ and $x$ $\in B_t(x_0, \e r_0)$
\begin{equation*}
|\Rm|(x, t)\leq \a t^{-1}+ (\e r_0)^{-2}.
\end{equation*}
\end{thm}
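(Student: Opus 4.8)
The plan is to argue by contradiction, combining a point-picking/blow-up analysis with the monotonicity and rigidity of Perelman's $\mathcal W$-entropy in a localized form. Suppose the assertion fails for a fixed $\a>0$. Then there are sequences $\e_j,\delta_j\to0$ and Ricci flows $(M_j^n,g_j(t))$, $t\in[0,(\e_j r_{0,j})^2]$, with base points $x_{0,j}$ and radii $r_{0,j}>0$ verifying the two hypotheses with $\delta=\delta_j$, yet admitting $t_j\in(0,(\e_j r_{0,j})^2]$ and $x_j\in B_{t_j}(x_{0,j},\e_j r_{0,j})$ at which the conclusion fails. Rescaling parabolically, normalize $r_{0,j}=1$; then $R_{g_j}\ge-1$ on $B_0(x_{0,j},1)$, the isoperimetric deficit there is $\delta_j\to0$, and $t_j\le\e_j^2\to0$.

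First I would carry out the point-selection standard in proofs of pseudolocality: among the points in a suitable parabolic region around $x_{0,j}$ at which the curvature bound (with a slightly weakened constant) fails, choose $(\bar x_j,\bar t_j)$ with $Q_j:=|\Rm|(\bar x_j,\bar t_j)$ essentially maximal under a size constraint. This should yield $Q_j\to\infty$, $\bar t_j\to0$, $\bar t_j Q_j\to\infty$, the bound $|\Rm|_{g_j}\le 2Q_j$ on a backward parabolic neighbourhood of $(\bar x_j,\bar t_j)$ whose rescaled radii tend to infinity, and---crucially---the rescaled $g_j(\bar t_j)$-distance from $\bar x_j$ to $x_{0,j}$ staying bounded, so that the near-Euclidean structure at $x_{0,j}$ is felt at $\bar x_j$. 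Setting $\tilde g_j(s):=Q_j g_j(\bar t_j+Q_j^{-1}s)$, Shi's local derivative estimates together with Hamilton's compactness theorem produce a pointed subsequential limit $(M_\infty^n,\tilde g_\infty(s),\bar x_\infty)$, $s\le0$, a complete ancient Ricci flow of bounded curvature with $R_{\tilde g_\infty}\ge0$ and $|\Rm|_{\tilde g_\infty}(\bar x_\infty,0)=1$.

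The heart of the matter is to show the limit is flat $\R^n$, contradicting $|\Rm|_{\tilde g_\infty}(\bar x_\infty,0)=1$. The near-Euclidean isoperimetric hypothesis on $B_0(x_{0,j},1)$, via the classical passage from isoperimetric inequalities to logarithmic Sobolev inequalities, forces a \emph{localized} $\mathcal W$-entropy quantity based near $x_{0,j}$ at a suitable scale to be $\ge-o(1)$ at time $0$. Running Perelman's $\mathcal W$-monotonicity in localized form---testing against a function cut off near $x_{0,j}$ and controlling the cutoff error, which is where the smallness of $\e$ and the bound $t\le(\e r_0)^2$ are consumed---keeps this quantity $\ge-o(1)$ up to time $\bar t_j$; on the other hand monotonicity together with the Euclidean behaviour of the entropy at small scales bounds it above by $o(1)$. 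By scale invariance of $\mathcal W$ the limit flow therefore inherits entropy identically $0$, and the rigidity in Perelman's monotonicity formula forces $(M_\infty,\tilde g_\infty)$ to be a gradient shrinking soliton which, being $\kappa$-noncollapsed with $R\ge0$ and zero entropy, is flat---the desired contradiction. Chasing the dependence of every estimate on $\a$ and $n$ yields admissible $\e=\e(\a,n)$ and $\delta=\delta(\a,n)$.

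The step I expect to be the main obstacle is the localization in the previous paragraph: converting the static, purely local isoperimetric hypothesis into information that survives both the flow and the rescaled limit. One must estimate the cutoff error in the localized entropy monotonicity sharply enough that it remains $o(1)$ in $j$, and make sure the limiting entropy is genuinely $0$ so that the rigidity applies; the Ricci lower bound and the bounded-curvature assumption on the flow are what make the compactness of the blow-up and the applicability of the monotonicity formula legitimate. A variant substitutes Perelman's reduced volume for the $\mathcal W$-entropy throughout, the near-Euclidean isoperimetric condition then translating into a near-Euclidean reduced volume and the monotonicity-plus-rigidity of $\tilde V$ closing the argument similarly.
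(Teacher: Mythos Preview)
The paper does not prove this statement at all: Theorem~\ref{Perelman} is quoted verbatim from \cite[Theorem~10.1]{Perelman-2002} purely as background and motivation for the authors' own result (Theorem~\ref{pseudo}). There is therefore no ``paper's own proof'' against which to compare your sketch.

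As an outline of Perelman's original argument, your proposal captures the contradiction/point-picking/entropy-monotonicity architecture, but it departs from Perelman at a point that matters. You pass to a smooth Cheeger--Gromov limit via Hamilton's compactness and then invoke soliton rigidity on that limit. Hamilton's compactness requires an injectivity radius lower bound at $(\bar x_j,\bar t_j)$, which you do not have a priori; non-collapsing at the blow-up scale is exactly what pseudolocality is meant to produce, so this step is circular as written. Perelman avoids taking any limit: after point-picking he works directly with the conjugate heat kernel $u$ based at $(\bar x_j,\bar t_j)$, proves a pointwise differential Harnack inequality for the integrand $v$ of the $\mathcal W$-functional, and shows that the large curvature at $(\bar x_j,\bar t_j)$ forces $\int v \le -\beta(\a)<0$ at a slightly earlier time. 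Monotonicity then propagates this to time $0$, where it contradicts the near-Euclidean isoperimetric hypothesis (via the log-Sobolev inequality) once one checks that enough of the heat kernel mass remains inside $B_0(x_{0,j},r_{0,j})$. No limiting flow or soliton rigidity enters; the contradiction is obtained by direct quantitative estimates on the fixed sequence.
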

Thus, Perelman's pseudolocality tells us that given a lower Ricci bound on an almost Euclidean region, we can deduce regularization in the sense of curvature control along the Ricci flow for short times. Since Perelman's work, many extensions have been developed in a variety of settings \cite{Chen-2009,ChauTamYu2012,TianWang2015,Wang-2020}.


Related regularization results for the Ricci flow under critical $L^{n/2}$ bounds of $\Rm$ have previously been studied in \cite{Yang-2011, Li-2012} assuming also pointwise two-sided bounds on $|\Ric|$, and in \cite{Wang-2011} assuming alternatively a supercritical $\|\Ric\|_p$, $p>n/2$ bound.  In this note, we will consider conditions which are scaling invariant. In particular, we study the flow under local critical $L^{n/2}$ bounds of $\Rm$, but will instead do so in combination with a Ricci lower bound and control of the local entropy, a localization of Perelman's entropy introduced by Wang \cite{Wang-2018}. 

Below we state our main result, referring to the beginning of Section \ref{CurvEst} for most of the associated notation. Throughout, we will use $a\wedge b$ to denote $\min\{a,b\}$.

\begin{thm}\label{pseudo}
For all $A,\lambda>0$, there are $C_0(n,A,\lambda)$, $\sigma(n,A)$ and $\hat T(n,A,\lambda)>0$ such that the following holds. Suppose $(M^n,g(t))$ is a complete Ricci flow of bounded curvature on $[0,T]$ and the initial metric $g(0)$ satisfies the followings:
\begin{enumerate}
\item[(a)] $\Ric(g(0))\geq -\lambda;$
\item[(b)] $\nu(B_{g(0)}(p,5),g(0),1)\geq -A$;
\item[(c)] $\left(\int_{B_{g(0)}(p, 2)}|\Rm(g_0)|^{n/2}d\mu_0\right)^{2/n}\leq \e $ for some $\e<\sigma $,
\end{enumerate}
for all $p\in M$. Then we have for any $x$ $\in M$ and  $t$ $\in (0,  T\wedge \hat T]$,
\be\label{C0 bdd for Riem}
|\Rm|(x,t)\leq\frac{C_0\e}{t} \quad\text{and}\quad \mathrm{inj}_{g(t)}(x) \geq C_0^{-1}\sqrt{t}.
\ee
Moreover, we have $\left(\int_{B_t(x,1)}|\Rm(g(t))|^{n /2}d\mu_t\right)^{2/n}\leq C_0 \e$ for all $x\in M$. In particular, the Ricci flow must exist up to $\hat T$.
\end{thm}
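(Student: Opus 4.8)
The plan is to establish \eqref{C0 bdd for Riem} together with the $L^{n/2}$ bound by a single continuity argument that bootstraps both the pointwise curvature bound and the scale-invariant $L^{n/2}$ bound, deriving the improvement step from a localized $L^{n/2}$-energy estimate and a parabolic Moser iteration for $|\Rm|$. Fix $A,\lambda$. Since $g(t)$ has bounded curvature on $[0,T]$, for a given small $\hat T$ we have $|\Rm|\le\Lambda$ on $[0,T\wedge\hat T]$ for some finite $\Lambda$; the constants $C_0(n,A,\lambda)$, $\sigma(n,A)$, $\hat T(n,A,\lambda)$ will be fixed below, all independent of $\Lambda$. Let $\tau\in(0,T\wedge\hat T]$ be the largest time so that, on $(0,\tau]$, both
\be\label{boot}
|\Rm|(\cdot,s)\le \frac{C_0\e}{s}\qquad\text{and}\qquad \lf(\int_{B_s(x,1)}|\Rm(g(s))|^{n/2}\,d\mu_s\ri)^{2/n}\le 2\e\qquad(x\in M)
\ee
hold. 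Boundedness of curvature near $t=0$, hypothesis (c), and continuity in $t$ of the scale-invariant $L^{n/2}$ quantity give $\tau>0$; the goal is to show $\tau=T\wedge\hat T$.

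On $(0,\tau]$ one exploits \eqref{boot} to control the ambient geometry. Hypothesis (b) and Wang's localized entropy monotonicity --- whose localization errors are controlled using hypothesis (a) and the bootstrap curvature bound --- give a uniform lower bound for the local $\nu$-entropy of $g(s)$ on balls of definite size, hence a local Sobolev inequality for $g(s)$ with constant $C_S(n,A)$ and $\kappa(n,A)$-noncollapsing at scales $\le\sqrt s$. Since $R(g(\cdot))\ge -n\lambda$ is preserved along the flow by hypothesis (a) and $|\Rm|(g(s))\le C_0\e/s$, Bishop--Gromov applied to $g(s)$ yields the non-inflating bound $\mathrm{vol}_{g(s)}(B_s(x,\sqrt s))\le C(n)\,s^{n/2}$ once $C_0\e\le 1$; the curvature bound also gives the metric and (via Hamilton's local estimate, whose scale $\sqrt{s}$ is matched to $C_0\e/s$) distance distortion estimates needed for the cutoff functions below.

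The improvement has two parts. For the integral bound, take a cutoff $\phi$ that is $1$ on $B_s(x,\tfrac12)$ and supported in $B_s(x,1)$; from $\heat|\Rm|^{n/2}\le c_n|\Rm|^{n/2+1}$, $\p_t d\mu_s=-R\,d\mu_s$ with $R\ge -n\lambda$, and the distortion estimates one obtains $\tfrac{d}{ds}\int_M|\Rm|^{n/2}\phi^2\,d\mu_s\le c_n\int_M|\Rm|^{n/2+1}\phi^2\,d\mu_s+C(n,\lambda)\int_{B_s(x,1)}|\Rm|^{n/2}\,d\mu_s$. Estimating the reaction term by H\"older and the local Sobolev inequality, the factor $\big(\int_{\mathrm{supp}\,\phi}|\Rm|^{n/2}\big)^{2/n}\le 2\e<\sigma$ --- which forces the choice of $\sigma=\sigma(n,A)$ --- lets one absorb the resulting gradient term into the good term from integrating $\Delta|\Rm|^{n/2}$ by parts, and a Gronwall argument on $[0,s]$ using (c) gives $\big(\int_{B_s(x,1)}|\Rm(g(s))|^{n/2}\big)^{2/n}\le\e(1+C(n,A,\lambda)\hat T)^{2/n}\le\tfrac32\e$ for $\hat T$ small (after a covering argument to pass from the half ball to the unit ball). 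For the pointwise bound, run the parabolic Moser iteration for $u=|\Rm|$ on the cylinder $B_s(x,\sqrt s)\times[s/2,s]$ --- the reaction $c_n u^2$ is absorbed as before since $\big(\int_B|\Rm|^{n/2}\big)^{2/n}\le 2\e<\sigma$, and noncollapsing together with non-inflating give the cylinder volume $\approx s^{(n+2)/2}$ --- converting $\int_{s/2}^s\!\int_{B_{s'}(x,\sqrt s)}|\Rm|\,d\mu_{s'}\,ds'$, which is $\le C\,s^{n/2}\e$ by H\"older and the $L^{n/2}$ bound, into $|\Rm|(x,s)\le C_1\e/s$ with $C_1(n,A,\lambda)$ independent of $C_0$ and $\Lambda$. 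Setting $C_0:=2C_1$ (enlarging as needed for the remaining conclusions) makes both inequalities of \eqref{boot} strict on $(0,\tau]$; hence they persist past $\tau$ by continuity, so $\tau=T\wedge\hat T$ and \eqref{boot} holds throughout $(0,T\wedge\hat T]$.

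This gives the curvature and $L^{n/2}$ bounds. The injectivity radius estimate follows from $|\Rm|(\cdot,t)\le C_0\e t^{-1}\le(c\sqrt t)^{-2}$ (for $\e<\sigma$) and $\mathrm{vol}_{g(t)}(B_t(x,\sqrt t))\ge\kappa t^{n/2}$ via the Cheeger--Gromov--Taylor injectivity radius estimate; and since $|\Rm|\le C_0\e/t$ stays bounded on $[t_0,\hat T]$ for every $t_0>0$, the flow cannot become singular before $\hat T$, so (as the hypotheses concern only $g(0)$) it may be extended until $T\ge\hat T$. The main obstacle throughout is that the bootstrap bound degenerates like $\e/t$ as $t\to0$, so $g(t)$ cannot be compared directly to $g(0)$: that the localized entropy monotonicity furnishes a Sobolev inequality valid down to $t=0$, and that the $L^{n/2}$-energy and Moser estimates are self-improving with constants independent of $\Lambda$, are precisely what make the bootstrap close.
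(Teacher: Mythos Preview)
Your proposal is correct and follows essentially the same strategy as the paper: a continuity argument in which the local $L^{n/2}$ bound is propagated via an energy estimate with cutoff (the reaction term absorbed by H\"older plus the local Sobolev inequality coming from Wang's entropy monotonicity), combined with a Moser iteration to convert the small $L^{n/2}$ bound into the pointwise bound $|\Rm|\le C\e/t$, followed by a covering argument and Cheeger--Gromov--Taylor for the injectivity radius.

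The only organisational difference is that the paper separates the ``$L^{n/2}$ small $\Rightarrow$ pointwise $\e/t$ bound'' step into an independent proposition proved by contradiction (choose a first bad time, rescale so the a priori bound becomes $|\Rm|<s^{-1}$, then Moser), and accordingly bootstraps only the $L^{n/2}$ bound, whereas you bootstrap both the pointwise and the $L^{n/2}$ bounds simultaneously and run the Moser iteration directly inside the loop.  Both arrangements rely on the same mechanism (smallness of $\|\Rm\|_{L^{n/2}}$ relative to the Sobolev constant absorbs the quadratic reaction); the contradiction packaging spares one from checking that the Moser constant $C_1$ is independent of $C_0$, while your direct version makes this independence explicit via the uniform bound $C_0\e\le 1$.
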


\begin{rem}
In the statement above, we choose the scale $1$ in the local entropy condition only for convenience. 
\end{rem}

Theorem \ref{pseudo} is a smoothing result also based on an initial ``almost Euclidean'' assumption. However, instead of characterizing this using the isoperimetric constant as in Theorem \ref{Perelman}, we instead use rough non-collapsing and small curvature concentration.

Using ideas related to those used to prove Theorem \ref{pseudo} and point-picking technique, we prove a gap result for steady and shrinking gradient Ricci solitons without assuming curvature boundedness.  

\begin{thm}\label{gap thm for soliton}
For all $A>0$, there is $\e(n,A)>0$ such that if $(M^n,g, f)$ is a complete shrinking or steady gradient Ricci soliton satisfying 
\begin{enumerate}
    \item[(a)] $\nu(M,g)\geq -A$;
    \item[(b)] $\left(\int_M |\Rm|^{n/2} d\mu \right)^{2/n}\leq \e$,
\end{enumerate}
then $(M,g)$ is isometric to the standard Euclidean space $\mathbb{R}^n$.
\end{thm}
{ 
\begin{rem}
The smallness condition (b) in Theorem \ref{gap thm for soliton} is necessary. Indeed, as pointed out by Chen-Zhu \cite{ChenZhu-2002}, the Eguchi-Hanson metric $(M,g)$ is Ricci flat with $|\Rm(x)|\leq C(d_g(x,p)+1)^{-6}$ and has Euclidean volume growth.
Therefore it satisfies $||\Rm||_{L^{n/2}}<\infty$ and $\nu(M,g)>-\infty$ by Remark \ref{volume ratio and Ricci bdd imply entropy lower bdd} and a scaling argument. However, it is not isometric to Euclidean flat metric.
\end{rem}
}

Gap results for gradient Ricci solitons have been previously studied for instance in \cite{Yokota2009,FG2012,Zhang2018} under global assumptions on the potential function $f$, sometimes along with pointwise curvature control (see also \cite{{MunteanuWang-2011},{ChenDeruelle-2015}, {DengZhu-2015}, {Chan-2020}} ).

Theorem \ref{pseudo} lends itself to several applications. First, we have a gap result for Ricci-nonnegative Riemannian manifolds with $\|\Rm\|_{L^{n/2}}$ small.


\begin{cor}\label{Corollary-diff} 
For all $A>0$, there is $\sigma(n,A)>0$ such that if $(M^n,g)$ is a complete noncompact manifold with 
\begin{enumerate}
\item bounded curvature
    \item $\Ric\geq 0$;
    \item\label{cornu} $\nu(M,g)\geq -A$ ;
    \item\label{corcurv} $\left(\int_M |\Rm|^{n/2} d\mu_g \right)^{2/n}\leq \sigma$.
\end{enumerate}
Then $g$ is of Euclidean volume growth. Moreover, $M^n$ is diffeomorphic to $\mathbb{R}^n$.
\end{cor}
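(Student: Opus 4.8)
The plan is to apply Theorem \ref{pseudo} to produce a Ricci flow starting from $(M^n,g)$ and then combine Perelman-type pseudolocality estimates with the curvature decay preserved along the flow to recover Euclidean volume growth and the diffeomorphism type. First I would check the hypotheses of Theorem \ref{pseudo}: since $\Ric(g)\ge 0$, condition (a) holds with $\lambda$ arbitrarily small (and one may rescale so that everything is phrased at scale $1$); the global entropy bound $\nu(M,g)\ge -A$ together with $\Ric\ge 0$ gives the local entropy lower bound (b), as indicated by the remark on \emph{volume ratio and Ricci bound imply entropy lower bound}; and (c) follows from the global smallness $\|\Rm\|_{L^{n/2}}\le\sigma$ by monotonicity of the integral over subsets, after choosing $\sigma\le\sigma(n,A)$ from Theorem \ref{pseudo} and noting that scaling the metric leaves the scale-invariant quantity $\|\Rm\|_{L^{n/2}}$ unchanged. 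Because $g$ has bounded curvature and is complete, the flow $g(t)$ exists on some $[0,T]$, and Theorem \ref{pseudo} upgrades this to existence on a definite interval $[0,\hat T]$ with $|\Rm|(x,t)\le C_0\e/t$ and $\mathrm{inj}_{g(t)}(x)\ge C_0^{-1}\sqrt t$ everywhere.

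The second step is to extract Euclidean volume growth. Along the flow, the curvature bound $|\Rm|(\cdot,t)\le C_0\e/t$ is small relative to $t^{-1}$, so Perelman's pseudolocality (and its canonical-neighborhood/non-collapsing consequences) applied in reverse, or a direct Bishop-Gromov comparison using the Ricci lower bound that the flow inherits, shows that the volume ratios $r^{-n}\mathrm{Vol}_{g(t)}(B_{g(t)}(x,r))$ stay bounded below by a uniform constant for $r\le \sqrt{\hat T}$; alternatively, the entropy bound $\nu\ge -A$ is preserved and directly forces a non-collapsing lower bound on volume ratios at scale $\sqrt t$. Since $\Ric(g)\ge 0$, Bishop-Gromov gives monotone non-increasing volume ratios for $g=g(0)$, so the lower bound at one scale propagates to a uniform lower bound $\mathrm{Vol}_g(B_g(p,r))\ge c_n r^n$ for all $r$, which is exactly Euclidean volume growth. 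One has to be slightly careful here to compare the $g(t)$-geometry with the $g(0)$-geometry using distance distortion estimates (controlled because $|\Ric|$ is bounded on the region where $t\ge$ const and by a Perelman-type distance-distortion argument near $t=0$), but this is standard.

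The third and main step is the diffeomorphism conclusion. With Euclidean volume growth and $\|\Rm\|_{L^{n/2}}$ small, together with $\Ric\ge 0$, I would argue that $(M,g)$ has a unique end and, via the curvature decay coming from the flow estimate pushed to $g(0)$ by letting the estimate and a covering/rescaling argument bootstrap pointwise smallness of curvature at large distances, that $M$ is asymptotically locally Euclidean with one end; then the smallness of the total curvature forces the asymptotic cone to be $\mathbb{R}^n$ itself (no nontrivial orbifold group, since a nontrivial quotient $\mathbb{R}^n/\Gamma$ would contribute a definite amount to a suitable $L^{n/2}$-type curvature integral, or would violate volume growth being exactly Euclidean). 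Finally, an argument in the spirit of the soliton gap Theorem \ref{gap thm for soliton} — point-picking to produce, from any hypothetical curvature concentration, a limit that is a nonflat complete manifold with $\Ric\ge 0$, Euclidean volume growth and still small $\|\Rm\|_{L^{n/2}}$, contradicting the gap — shows either that $g$ is already flat (hence $M=\mathbb{R}^n$) or, in the genuinely noncompact non-flat case, that $M$ is diffeomorphic to $\mathbb{R}^n$ by invoking the fact that a complete noncompact manifold with $\Ric\ge 0$ and Euclidean volume growth whose tangent cone at infinity is $\mathbb{R}^n$ is diffeomorphic to $\mathbb{R}^n$ (for instance via the soul theorem once one knows the manifold is flat outside a compact set, or via Cheeger-Colding-type stability). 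I expect this last step — upgrading "has Euclidean volume growth and small total curvature" to "is diffeomorphic to $\mathbb{R}^n$", i.e. ruling out exotic topology at infinity and controlling the end — to be the principal obstacle, and the Ricci flow is precisely the tool that removes it: the uniform injectivity radius lower bound $\mathrm{inj}_{g(t)}\ge C_0^{-1}\sqrt t$ plus the curvature bound give, after rescaling, uniform local smooth control, so any sequence of rescalings converges smoothly to a limit which by the gap theorem must be flat $\mathbb{R}^n$, forcing the ends of $M$ to be standard and hence $M\cong\mathbb{R}^n$.
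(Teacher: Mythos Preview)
Your outline has the right shape but contains a real gap in Step 2 and is too vague in Step 3 compared with what the paper actually does.

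\textbf{Step 2 (Euclidean volume growth).} You write that a volume-ratio lower bound ``at one scale propagates to a uniform lower bound $\mathrm{Vol}_g(B_g(p,r))\ge c_n r^n$ for all $r$'' via Bishop--Gromov. This is backwards: under $\Ric\ge 0$, Bishop--Gromov says $r\mapsto r^{-n}\mathrm{Vol}(B(p,r))$ is \emph{non-increasing}, so a lower bound at some fixed scale $\sqrt{\hat T}$ only gives you a lower bound for \emph{smaller} $r$, not for $r\to\infty$. The paper fixes this by a rescaling argument (Theorem~\ref{Theorem-LT}): for each $R>0$, set $g_{R,0}=R^{-2}g$, which still satisfies the scale-invariant hypotheses; apply Theorem~\ref{pseudo} to the flow starting from $g_{R,0}$ to get curvature control and (via entropy non-collapsing and \cite{SimonTopping2016}) a volume lower bound at scale $1$ for $g_{R,0}$; then rescale back to obtain $\mathrm{Vol}_{g}(B_g(p,2R))\ge c R^n$. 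Since $R$ is arbitrary this yields Euclidean volume growth, and uniqueness of the Ricci flow gives long-time existence as a byproduct. You gesture at rescaling in Step 1 but never deploy it here, which is where it is essential.

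\textbf{Step 3 (diffeomorphism).} Your plan mixes several possible mechanisms (ALE structure, orbifold groups, soul theorem, point-picking \`a la Theorem~\ref{gap thm for soliton}) without isolating one that actually closes. The soliton argument does not apply because $g$ is not self-similar; ``flat outside a compact set'' does not follow from small $\|\Rm\|_{L^{n/2}}$ alone. The paper's route is much cleaner: prove (Theorem~\ref{gap-entrop}) that by shrinking $\sigma$ one can force $\mathrm{Vol}_g(B_g(p,r))\ge (1-\delta)\omega_n r^n$ for some small $r$; combined with the rescaling of Theorem~\ref{Theorem-LT} this gives asymptotic volume ratio arbitrarily close to $1$. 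Then the diffeomorphism to $\mathbb{R}^n$ is a direct citation of Cheeger--Colding \cite[Theorem A.1.11]{CheegerColding1997} (or \cite[Theorem 5.7]{Wang-2020}). The proof of Theorem~\ref{gap-entrop} is a short contradiction/blow-up argument using the $|\Rm|\le C\e t^{-1}$ estimate from Theorem~\ref{pseudo}: rescaled flows converge to a flat limit with entropy bounded below, hence to $(\mathbb{R}^n,g_{\mathrm{euc}})$, contradicting any volume defect. This is close in spirit to your final paragraph, but the key quantitative output---near-Euclidean volume ratio, not merely a positive asymptotic volume ratio---is what you need and do not state.
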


There is a large body of work on Ricci-nonnegative noncompact manifolds, and several results show that under some additional assumptions (such as almost Euclidean volume growth), such manifolds must be diffeomorphic to $\mathbb{R}^n$ \cite{BKN1989,CheegerColding1997,Xia1999}. Corollary \ref{Corollary-diff} is related to two results of this kind by Ledoux and Xia \cite{Ledoux1999,Xia2001}, which assert that a complete, Ricci-nonnegative manifold with Euclidean-type Sobolev constant close to that of Euclidean space must be diffeomorphic to $\mathbb{R}^n$. Indeed, Condition (\ref{cornu}) of Corollary \ref{Corollary-diff} above can be seen as a weakening of this requirement, since it holds as long as there is some constant which makes the Euclidean-type Sobolev inequality valid. This is compensated for by Condition (\ref{corcurv}) on the smallness of the total scale-invariant curvature.

Carron has also pointed out to us that Corollary \ref{Corollary-diff} is related to the following statement which can be derived from works of Cheeger \cite{Cheeger2003} and Cheeger-Colding \cite{CheegerColding1997}: If a complete noncompact manifold with $\Ric\geq 0$ has $\int_M|\Rm|^{n/2}d\mu_g$ sufficiently small relative its asymptotic volume ratio (assumed nonzero), then it must be diffeomorphic to $\mathbb{R}^n$. Indeed, these hypotheses ensure by \cite[Theorem 4.32]{Cheeger2003} that the manifold must have asymptotic volume ratio close to one, from which one can conclude that the manifold must be diffeomorphic to $\mathbb{R}^n$ by \cite[Theorem A.1.11.]{CheegerColding1997} (see also \cite[Theorem 5.7]{Wang-2020} for a recent proof via Ricci flow). Our assumptions in Corollary \ref{Corollary-diff} differ slightly from this statement's because our required smallness of $\sigma$ is relative to a lower bound on entropy. Although we will prove below that under bounded curvature, bounded entropy, and small $||\Rm||_{L^{n/2}}$ we indeed have almost Euclidean volume growth, it is unclear to us whether an entropy lower bound implies an asymptotic volume ratio lower bound in general. The method here relies on the existence of Ricci flow with uniform estimates. It will be interesting to know if the curvature boundedness is necessary to ensure its existence in general.




We can also apply Theorem \ref{pseudo} to obtain a finite diffeomorphism-type result and a  Gromov--Hausdorff compactness result in the setting of length spaces.


\begin{cor}\label{diffeotype} For all $A>0$, there is $\sigma(n,A)>0$ such that for $C_1,C_2$, the space of compact manifolds $(M,g)$ satisfying 
\begin{enumerate}
    \item[(a)] $\Ric(g)\geq -C_1$;    
    \item[(b)] $Vol_{g}(M)\leq C_2$;
    \item[(c)] $\inf_{p\in M}\nu(B_g(p,5),g,1)\geq -A$;
    \item[(d)] $\sup_{p\in M}\left(\int_{B_g(p,2)}|\Rm|^{n/2} d\mu_g\right)^{2/n} \leq \sigma $
\end{enumerate}
contains finitely many diffeomorphism types.
\end{cor}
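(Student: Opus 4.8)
The plan is to run the Ricci flow starting from each such manifold $(M,g)$, apply Theorem \ref{pseudo} to get uniform curvature and injectivity radius bounds at some definite positive time, and then invoke Cheeger--Gromov compactness together with standard finiteness results. First I would fix $A$ and set $\sigma = \sigma(n,A)$ to be the constant from Theorem \ref{pseudo} (shrinking it if necessary along the way). Given $(M,g)$ compact satisfying (a)--(d), by compactness the Ricci flow $g(t)$ starting at $g$ exists on a maximal interval $[0,T_{\max})$ with bounded curvature on each $[0,T]$, so Theorem \ref{pseudo} applies with $\lambda = C_1$: hypotheses (a), (b), (c) of that theorem follow from (a), (c), (d) here (the entropy hypothesis (b) of Theorem \ref{pseudo} is exactly (c) here, and note $\nu(B_{g(0)}(p,5),g(0),1)\geq -A$ for all $p$ is what is assumed). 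Thus for $t\in(0,T\wedge\hat T]$ we get $|\Rm|(x,t)\leq C_0\e/t$ and $\mathrm{inj}_{g(t)}(x)\geq C_0^{-1}\sqrt t$, uniformly over the class, and in particular the flow exists past $\hat T$, so we may evaluate at the fixed time $t_0 := \hat T(n,C_1,A)/2$ (or $\hat T\wedge 1$, etc.), obtaining a uniform two-sided geometric control on $g(t_0)$: $|\Rm_{g(t_0)}|\leq \Lambda(n,A,C_1)$ and $\mathrm{inj}_{g(t_0)}\geq i_0(n,A,C_1)>0$.

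Next I would control the diameter and volume of $(M,g(t_0))$. The volume upper bound $Vol_g(M)\leq C_2$ together with the curvature bound $|\Rm|\le C_0\e/t$ on $(0,t_0]$ (hence $|\Ric|\le (n-1)C_0\e/t$) gives, by integrating $\partial_t d\mu_t$, a comparison $c\, Vol_g(M)\le Vol_{g(t_0)}(M)\le C\, Vol_g(M)$ for constants depending only on $n,A,C_1$ and $t_0$; so $Vol_{g(t_0)}(M)\le C_2'$. For the diameter: with $|\Rm_{g(t_0)}|\le\Lambda$ and $\mathrm{inj}_{g(t_0)}\ge i_0$, a ball of radius $i_0$ has volume bounded below by $v_0(n,\Lambda,i_0)>0$, so $M$ is covered by at most $Vol_{g(t_0)}(M)/v_0 \le C_2'/v_0$ such balls, whence $\mathrm{diam}_{g(t_0)}(M)\le D(n,A,C_1,C_2)$.

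Finally, the class of closed Riemannian manifolds $(M,g(t_0))$ with $|\Rm|\le\Lambda$, $\mathrm{inj}\ge i_0$, and $\mathrm{diam}\le D$ is precompact in the $C^{1,\alpha}$ (indeed $C^\infty$) Cheeger--Gromov topology by Cheeger--Gromov--Anderson compactness, and a precompact family of closed manifolds in this topology realizes only finitely many diffeomorphism types (any sequence has a subsequence converging smoothly, hence eventually diffeomorphic, to a fixed limit manifold). Since $g$ and $g(t_0)$ are diffeomorphic (they are the same smooth manifold $M$), the original class $\{(M,g)\}$ contains finitely many diffeomorphism types, as claimed.

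The main obstacle I anticipate is the bookkeeping needed to verify that the hypotheses of Theorem \ref{pseudo} are met with constants uniform over the class and that the output time $\hat T$ and constant $C_0$ depend only on $(n,A,C_1)$ — in particular making sure the Ricci-lower-bound $\lambda$ plays the role of $C_1$ correctly and that the local entropy assumption is genuinely scale-$1$ as stated. The rest (volume/diameter control under bounded-curvature Ricci flow, and the classical finiteness theorem) is routine. One should also double-check that no completeness/noncompactness issue arises, but here $M$ is compact so short-time existence and bounded curvature on compact time intervals are automatic.
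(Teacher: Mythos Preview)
Your proposal is correct and follows exactly the approach the paper indicates: it does not give a detailed proof but refers to the argument of Perelman/Kleiner--Lott \cite[Theorem 37.1]{KleinerLott2008}, with Perelman's pseudolocality replaced by Theorem~\ref{pseudo}, i.e.\ run the flow, extract uniform curvature and injectivity-radius bounds at a fixed time $t_0\le \hat T$, bound the diameter via the volume, and invoke Cheeger's finiteness theorem. One small fix: your volume comparison via $|\Ric|\le (n-1)C_0\e/t$ does not work as written because $1/t$ is not integrable at $t=0$; instead use the scalar-curvature lower bound $R(g(t))\ge -nC_1$ (preserved by the maximum principle from $\Ric(g)\ge -C_1$) to get $\mathrm{Vol}_{g(t_0)}(M)\le e^{nC_1 t_0}\,\mathrm{Vol}_g(M)\le C_2'$, which is the only inequality you actually need for the diameter bound.
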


Corollary \ref{diffeotype} follows via an argument analogous to the that in the proof of \cite[Theorem 37.1]{KleinerLott2008}, which was proved by Perelman \cite[Remark 10.5]{Perelman-2002} using Perelman’s pseudolocality. In our case, the use of Perelman’s pseudolocality is replaced by Theorem~\ref{pseudo}.

\begin{thm}\label{Theorem:GH} For any positive integer $n\geq 3$ and constant $A\geq 1000n$, there exists constant $\e_0(n,A)$ such that the following holds. Suppose $( M^n_i,g_i, p_i)$ is a pointed sequence of Riemannian manifolds with the following properties:
\begin{enumerate}
\item[(a)] $(M_i,g_i)$ has bounded curvature; 
\item[(b)] $\Ric(g_i)\geq -\lambda;$
\item[(c)] $\left(\int_{B_{g_i}(q, 2)}|\Rm(g_i)|^{n/2}d\mu_i\right)^{2/n}\leq \e_0$ for all $q$ in $M_i$ ;
\item[(d)] $\nu(B_{g_i}(q, 5), g_i,1)\geq -A$,  for all $q$ in $M_i$.
\end{enumerate}
Then there exists a smooth manifold $M_\infty$ and a complete distance metric $d_\infty$ on $M_\infty$ generating the same topology as $M_\infty$ such that after passing to sub-sequence, $(M_i, d_{g_i}, p_i)$ converges in pointed Gromov Hausdorff sense to $(M_{\infty}, d_{g_{\infty}}, p_{\infty})$.
\end{thm}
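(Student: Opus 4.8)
The plan is to evolve each $(M_i,g_i)$ by the Ricci flow, use Theorem~\ref{pseudo} to obtain uniform smoothing at positive times, extract a smooth limit flow, and then transfer this back to the \emph{initial} metrics via distance distortion estimates. \emph{Step 1 (the flows and their smooth limit).} Since $(M_i,g_i)$ is complete with bounded curvature, Shi's theorem produces a complete bounded-curvature Ricci flow $g_i(t)$ with $g_i(0)=g_i$, and hypotheses (b),(d),(c) above are exactly hypotheses (a),(b),(c) of Theorem~\ref{pseudo}, uniformly over basepoints. Choosing $\e_0<\sigma(n,A)$, the flows then exist on $[0,\hat T]$ with $\hat T=\hat T(n,A,\lambda)>0$ and, uniformly in $i$,
\[
|\Rm|(x,t)\le\frac{C_0\e_0}{t},\qquad \mathrm{inj}_{g_i(t)}(x)\ge C_0^{-1}\sqrt t,\qquad \Big(\int_{B_t(x,1)}|\Rm|^{n/2}d\mu_t\Big)^{2/n}\le C_0\e_0 .
\]
On $[\tfrac12\hat T,\hat T]$ the curvature is uniformly bounded, so Shi's local derivative estimates control all $|\nabla^k\Rm|$ on $[\tfrac34\hat T,\hat T]$; together with the injectivity radius bound this puts us in the setting of Hamilton's compactness theorem, and diagonalizing over $t_0=\hat T/k$ a subsequence of $(M_i,g_i(t),p_i)$ converges in the pointed Cheeger--Gromov sense to a complete pointed Ricci flow $(M_\infty,g_\infty(t),p_\infty)$ on $(0,\hat T]$, inheriting the same curvature, injectivity radius and $L^{n/2}$ bounds.

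\emph{Step 2 (distance distortion).} The key is a two-sided comparison of $d_{g_i(t)}$ with $d_{g_i(0)}$. For the lower bound, since $\Ric(g_i(\tau))\le (n-1)C_0\e_0/\tau$ everywhere, Hamilton's change-of-distance estimate applied on $\sqrt\tau$-balls about the endpoints gives $\tfrac{d}{d\tau}d_{g_i(\tau)}(x,y)\ge-c(n)\tau^{-1/2}$ whenever $d_{g_i(\tau)}(x,y)\ge2\sqrt\tau$; integrating and letting $s\to0$ (legitimate because a bounded-curvature flow is smooth up to $t=0$, so $d_{g_i(\tau)}\to d_{g_i(0)}$) yields $d_{g_i(t)}(x,y)\ge d_{g_i(0)}(x,y)-c(n)\sqrt t$, the short-distance case being immediate. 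For the upper bound one needs a uniform Ricci lower bound along the flow: the most negative eigenvalue $\rho$ of $\Ric$ obeys $\partial_t\rho\le\Delta\rho+c(n)|\Rm|\rho$ in the barrier sense, and because the scale-invariant local $L^{n/2}$ norm of $\Rm$ is small — hence, by H\"older, $|\Rm|$ has small space-time $L^1$ mass on unit parabolic cylinders — while the localized entropy bound supplies a Sobolev inequality on each time slice, a parabolic Moser iteration propagates $\Ric(g_i(0))\ge-\lambda$ to $\Ric(g_i(\tau))\ge-\Lambda$ on $[0,\hat T]$ with $\Lambda=\Lambda(n,A,\lambda)$. Then $\tfrac{d}{d\tau}d_{g_i(\tau)}(x,y)\le\Lambda\,d_{g_i(\tau)}(x,y)$ gives $d_{g_i(t)}(x,y)\le e^{\Lambda t}d_{g_i(0)}(x,y)$. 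Consequently, on every fixed ball $B_{g_i(0)}(p_i,R)$ the identity map into $(M_i,d_{g_i(t)})$ is a Gromov--Hausdorff $\Psi(t)$-approximation with $\Psi(t)\to0$ as $t\to0$ (depending only on $n,A,\lambda,R$), uniformly in $i$. Applying the same two estimates to $g_\infty(t)$ makes $d_{g_\infty(t)}$ uniformly Cauchy on bounded sets as $t\to0^+$; it therefore converges locally uniformly to a distance $d_\infty$ on $M_\infty$ with $e^{-\Lambda t}d_{g_\infty(t)}\le d_\infty\le d_{g_\infty(t)}+c(n)\sqrt t$, so $d_\infty$ is a complete metric inducing the manifold topology of $M_\infty$, and $g_\infty(t)$ converges locally uniformly as $t\to0^+$ to a metric $g_\infty$ with $d_{g_\infty}=d_\infty$.

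\emph{Step 3 (identification of the limit).} By Gromov precompactness (using $\Ric(g_i)\ge-\lambda$), after a further subsequence $(M_i,d_{g_i},p_i)$ converges in the pointed Gromov--Hausdorff sense to some pointed length space $(X,d_X,x_\infty)$; and for each $t\in(0,\hat T]$ the smooth convergence of Step 1, together with completeness of the limit, gives $(M_i,d_{g_i(t)},p_i)\to(M_\infty,d_{g_\infty(t)},p_\infty)$ in the pointed Gromov--Hausdorff sense. Composing these convergences with the uniform $\Psi(t)$-approximations of Step 2 and using the triangle inequality for pointed Gromov--Hausdorff distance shows $(X,d_X,x_\infty)$ is $\Psi(t)$-close to $(M_\infty,d_{g_\infty(t)},p_\infty)$ for every $t$; letting $t\to0^+$ and using $d_{g_\infty(t)}\to d_\infty$ we conclude $(X,d_X,x_\infty)\cong(M_\infty,d_\infty,p_\infty)$, which is precisely the assertion.

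The one genuinely delicate point is the uniform Ricci lower bound $\Ric(g_i(\tau))\ge-\Lambda(n,A,\lambda)$ on all of $[0,\hat T]$ (equivalently, the ``distances do not expand'' half of Step 2): since a Ricci lower bound is not preserved by the Ricci flow in dimension $n\ge4$, one cannot argue by a bare maximum principle, and must exploit the smallness of the scale-invariant local $L^{n/2}$ curvature to absorb the potential term in the Moser iteration, with the localized entropy providing the requisite Sobolev inequality (there is a mild volume-bound/Ricci-bound interdependence here, dealt with by a continuity-in-time bootstrap). Everything else — Hamilton's compactness and change-of-distance estimates, Gromov precompactness, and the bookkeeping with Gromov--Hausdorff approximations — is standard.
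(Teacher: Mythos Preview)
Your overall strategy---run the Ricci flow, extract a smooth Cheeger--Gromov limit on $(0,\hat T]$, and transport back to $t=0$ via two-sided distance distortion---is exactly the paper's approach, and Steps~1 and~3 are essentially what the paper does. The problem is the ``distances do not expand'' half of Step~2.

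The evolution inequality you write down, $\partial_t\rho\le\Delta\rho+c(n)|\Rm|\rho$ for the negative part $\rho$ of the smallest Ricci eigenvalue, is \emph{false}. Indeed, if it held, then from $\rho(0)\equiv 0$ the maximum principle would give $\rho\equiv 0$, i.e.\ $\Ric\ge 0$ would be preserved---which, as you yourself note, fails for $n\ge 4$. The actual reaction term in the evolution of the smallest Ricci eigenvalue is controlled only by $c(n)|\Rm|\,|\Ric|$ (equivalently $c(n)|\Rm|^2$), not by $c(n)|\Rm|\rho$: the other Ricci eigenvalues enter and cannot be bounded by $\rho$. With the correct source $|\Rm|^2$ your Moser scheme breaks down, since this term is neither absorbable as a small-$L^{n/2}$ potential nor integrable in time under $|\Rm|\le C\e_0/t$. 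So the claimed uniform bound $\Ric(g_i(\tau))\ge-\Lambda$ on $[0,\hat T]$, and with it the Lipschitz distance upper bound $d_{g_i(t)}\le e^{\Lambda t}d_{g_i(0)}$, is not established.

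The paper sidesteps this entirely: it never propagates a Ricci lower bound along the flow. Instead it uses a H\"older-type ball/distance comparison (\cite[Lemma~2.4]{HuangKongRongXu2018}, see also \cite[Lemma~2.2]{LeeTam2020}) which only requires $|\Rm|\le \e t^{-1}$ with $\e$ small together with the \emph{initial} bound $\Ric(g_0)\ge-\lambda$. This yields, for small distances,
\[
C_n^{-1}\,d_{g_\infty(1)}(x,y)^{3/2}\ \le\ d_\infty(x,y)\ \le\ C_n\,d_{g_\infty(1)}(x,y)^{1/2},
\]
which is weaker than your Lipschitz comparison but already suffices to show that $d_\infty$ is a genuine distance generating the manifold topology of $M_\infty$, and to carry out the Gromov--Hausdorff identification. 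If you replace your Ricci-propagation step by this H\"older comparison, the rest of your argument goes through.
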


\begin{rem}
The Ricci lower bound assumption on the initial metric can in fact be weakened to a scalar curvature lower bound and uniform volume doubling for some fixed scale. But we feel it is more natural to state the result with the Ricci assumption. 
\end{rem}

\begin{rem}\label{volume ratio and Ricci bdd imply entropy lower bdd} The initial Ricci curvature lower bound in fact gives a Sobolev inequality on the geodesic balls in $M$, which in turn implies a Log Sobolev inequality and provides a lower bound for the local entropy in terms of its volume. 
Hence the local entropy $\nu(B_{g_0}(p,5),g_0,1)$ lower bound condition in the above theorem can be replaced by a uniform volume lower bound condition for the geodesic balls on $M$, namely, 
\be
\mathrm{Vol}_{g_0}(B_{g_0}(p,5))\geq v_0,
\ee
for some positive constant $v_0$, for all $p$ $\in M$. In that case, the constants $\e$, $C$ and $\hat T$ also depend on $v_0$. In particular, the global entropy $\nu(M,g)$'s lower bound can be replaced by $\Ric\geq 0$ and a lower bound on the  asymptotic volume ratio. 
\end{rem}

There have been many studies of compactness under scale-invariant integral curvature bounds, notably Anderson--Cheeger's diffeomorphism finiteness result \cite{AndersonCheeger1991}. Orbifold compactness results under $\|\Rm\|_{L^{n/2}}$ bounds have also been obtained for Einstein manifolds as well as for both compact and noncompact gradient Ricci solitons \cite{Anderson1989,CaoSesum2007,HaslhoferMuller2011}. In comparison, Theorem \ref{Theorem:GH} does not impose such analytic conditions on $(M_i,g_i)$, but does require sufficient smallness of the local scale-invariant curvature concentration.

Theorem \ref{Theorem:GH} is also a smoothing result for limit spaces of manifolds with lower curvature bounds, achieved via distance distortion estimates and pseudolocality-type estimates of the Ricci flow. There has been much recent work in this direction in many different settings \cite{BCW2019,McLeodTopping2018,McLeodTopping2019,SimonTopping2016,SimonTopping2017,Lai2019,Hochard2019,LeeTam2019,HuangRongWang-2020,HuangWang-2020}.

The structure of the rest of this paper is as follows. In Section \ref{CurvEst}, we prove our main smoothing result, Theorem \ref{pseudo}. In Section \ref{gapsection}, we prove our gap result for gradient Ricci soliton, Theorem \ref{gap thm for soliton}. In Section \ref{gapriccisection}, we prove our gap result for complete noncompact Ricci nonnegative manifolds, Corollary \ref{Corollary-diff}. Finally in Section \ref{ghsection}, we prove our Gromov--Hausdorff compactness result, Theorem \ref{Theorem:GH}.

{\it Acknowledgements}: The authors would like to thank Peter Topping for the interest in this work as well as Gilles Carron for pointing out the reference \cite{Cheeger2003} and related results. The authors would also like to thank the referee for pointing out a mistake in the earlier version of this paper. P.-Y. Chan would like to thank Bennett Chow, Lei Ni and Jiaping Wang for continuous encouragement and support. E. Chen thanks Guofang Wei and Rugang Ye for their support and was partially supported by an AMS--Simons Travel Grant. M.-C. Lee was partially supported by NSF grant DMS-1709894 and EPSRC grant number P/T019824/1. 

\section{Curvature estimates of Ricci flows}\label{CurvEst}
In this section, we will prove the semi-local estimates of the Ricci flow. We begin by fixing some notation below.

Suppose $(M^n,g)$  is an $n$ dimensional complete (not necessarily compact) Riemannian manifold and $\Omega$ is a connected domain on $M$ with smooth boundary (boundaryless if $M=\Omega$). Hereinafter, we shall reserve the positive integer $n$ for the dimension of $M$. Wang \cite{Wang-2018} localized Perelman's entropy and proved an almost monotonicity in local entropy when $\Omega$ is bounded, generalizing the result in \cite{Perelman-2002}. Using his notation, we have:
\begin{align}
&D_g(\Omega):=\left\{u: u\in W^{1,2}_0(\Omega), u\geq 0 \text{  and  } \|u\|_2=1 \right\},
\\
&W(\Omega, g, u, \tau):=\int_{\Omega}\tau(Ru^2+4|\nabla u|^2)-2u^2\log u    d\mu\label{def of local nu}
\\
&\qquad\qquad\qquad\qquad-\frac{n}{2}\log(4\pi\tau)-n,\notag
\\
&\nu(\Omega, g, \tau):= \inf_{u\in D_g(\Omega), s \in (0, \tau]}W(\Omega, g, u, s),
\\
&\nu(\Omega, g):= \inf_{\tau\in (0,\infty)}\nu(\Omega, g, \tau).
\end{align}

In order to prove the curvature estimate of Theorem \ref{pseudo}, we first show that it can be reduced to the preservation of local $L^{n/2}$ control of $\Rm(g(t))$. 

\begin{prop}\label{curv-esti}
For all $A>0$, there is $c_0(n, A)>0$ such that the following holds. 
Suppose $(M,g(t)),t\in [0,T]$ is a complete Ricci flow with bounded curvature such that for all $(x,t)\in M\times [0,T]$, the following holds:
\begin{enumerate}
    \item[(a)] $\nu(B_{g_0}(x,10^6n\sqrt{T}),g_0,2T)\geq -A$;
    \item[(b)] $\left(\int_{B_{g(t)}(x,\sqrt{t})}|\Rm(g(t))|^{n/2}d\mu_{g(t)}\right)^{n/2}\leq c_0 \e$ for $\e<1$,
\end{enumerate}
then we have 
\begin{equation}
\sup_M |\Rm(x,t)|< \e t^{-1}
\end{equation}
for all $t\in (0,T]$.
\end{prop}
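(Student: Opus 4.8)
The plan is to argue by contradiction using a point-picking (Perelman-type) argument combined with the localized pseudolocality and local entropy theory of Wang \cite{Wang-2018}. Suppose the conclusion fails; then there is a sequence of complete bounded-curvature Ricci flows $(M_i,g_i(t))$, $t\in[0,T_i]$, satisfying (a) and (b) with $c_0=c_0^{(i)}\to 0$ (or with a fixed $c_0$ to be determined, whichever formulation one prefers), yet with $\sup_{M_i}|\Rm|(\cdot,t_i)\,t_i\geq \e_i$ at some time $t_i\in(0,T_i]$. The first step is to rescale and do point-picking: using the bounded curvature on each compact time interval, choose points $(x_i,t_i)$ where the quantity $|\Rm|(x,t)\cdot t$ is almost maximized (e.g. maximized up to a factor $2$, or by a more careful Perelman point-selection so that curvature is controlled at nearby earlier points and in a parabolic neighborhood), rescale $\tilde g_i(t) = Q_i\, g_i(t_i + t/Q_i)$ with $Q_i = |\Rm_{g_i}|(x_i,t_i)$, and pass to a pointed smooth Cheeger--Hamilton limit. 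The rescaled flows have $|\Rm|\le $ const on a backward parabolic region and $|\Rm|(x_i,0)=1$, so the limit is a non-flat ancient (or at least backward-extended) Ricci flow $(M_\infty,g_\infty(t),x_\infty)$ with $|\Rm|(x_\infty,0)=1$.

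The second step is to show the limit is both \emph{flat} and \emph{non-collapsed}, yielding a contradiction. Non-collapsing comes from hypothesis (a): the local entropy lower bound $\nu(B_{g_i(0)}(x,10^6n\sqrt{T_i}),g_i(0),2T_i)\ge -A$ is scale-invariant in the appropriate sense and, by Wang's localized monotonicity of the local entropy along Ricci flow, passes to the flow at later times on suitable subdomains, controlling the local $\nu$-functional at $(x_i,t_i)$ at a definite scale; after rescaling this gives a uniform $\kappa$-noncollapsing for the limit $g_\infty$ at scale $1$, so $M_\infty$ is not collapsed. Flatness comes from hypothesis (b): the scale-invariant local $L^{n/2}$ norm of $\Rm$ over balls of radius $\sqrt t$ is bounded by $c_0\e < c_0$, and this quantity is invariant under parabolic rescaling; moreover, along the Ricci flow one should show (this is the heart of the matter, see below) that this smallness is essentially preserved forward in time up to a controlled factor. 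Passing to the limit, $\int_{B_{g_\infty(t)}(x,1)}|\Rm(g_\infty(t))|^{n/2}\,d\mu \le C(n,A)\,c_0$, which can be made arbitrarily small. Combined with the non-collapsing and a local $\e$-regularity statement for Ricci flow under small $L^{n/2}$ curvature (a Moser-iteration / Bernstein-type estimate: small scale-invariant $\|\Rm\|_{L^{n/2}}$ on a non-collapsed parabolic ball forces a pointwise curvature bound strictly below the normalization $|\Rm|(x_\infty,0)=1$), this contradicts $|\Rm|(x_\infty,0)=1$ once $c_0$ is chosen small enough depending only on $n$ and $A$.

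The main obstacle is the preservation (up to a universal multiplicative constant) of the local scale-invariant $L^{n/2}$ bound on $\Rm$ along the flow, since without this one cannot transfer hypothesis (b), stated at all times $t$, into usable information about the limiting ancient flow in a way that survives the rescaling — in particular one must be careful that (b) is assumed at \emph{every} time $t\in[0,T]$ at the self-improving scale $\sqrt t$, so the content is really a bootstrapping/continuity argument showing the constant $c_0$ does not degenerate. Concretely, I would: (i) fix the largest time interval $[0,t^*]$ on which $\sup_M |\Rm|\le \e t^{-1}$ holds (nonempty near $0$ by bounded curvature of $g(0)$ and continuity), (ii) on this interval use the curvature bound together with (b) and the evolution equation for $|\Rm|^{n/2}$ (or for a smooth cutoff of it) to run a localized Moser iteration and De Giorgi--Nash--Moser estimate — this is where Wang's local entropy bound enters to supply the Sobolev inequality along the flow, cf. the Log-Sobolev inequality implied by $\nu\ge -A$ — and deduce $|\Rm|(x,t)\le \tfrac12\,\e t^{-1}$ strictly, hence $t^*=T$ by openness/closedness. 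The estimates needed are standard in form (cutoff functions in space-time, the reaction term $|\nabla \Rm|^2$ dominating via the elliptic part, and absorbing lower-order terms using the Ricci lower bound coming from $\Ric(g_0)\ge-\lambda$ propagated as in Perelman-type arguments), but the bookkeeping of the dependence of all constants on only $n$ and $A$ — and the fact that the smallness threshold $c_0$ can be taken uniform — is the delicate part and would occupy most of the actual proof.
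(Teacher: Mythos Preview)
Your proposal lands in the right neighborhood but is substantially more complicated than the paper's argument, and it misidentifies the main obstacle.

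The paper does \emph{not} pass to a Cheeger--Hamilton limit. Instead, it rescales by \emph{time} rather than by curvature: one takes $t_i$ to be essentially the first time the desired bound $|\Rm|<\e t^{-1}$ fails (so $|\Rm|<2\e_i s^{-1}$ for $s<t_i$), sets $Q_i=t_i^{-1}$, and considers $\tilde g_i(t)=Q_i g_i(Q_i^{-1}t)$ on $[0,1]$. After this rescaling one has $|\Rm(\tilde g_i)|<s^{-1}$ on $(0,1)$, $|\Rm(\tilde g_i)(x_i,1)|=\e_i$, the local entropy bound persists by Wang's monotonicity, and hypothesis (b) (which is scale-invariant) gives $\|\Rm(\tilde g_i(t))\|_{L^{n/2}(B_t(x,\sqrt t))}\le c_i\e_i\to 0$. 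The curvature bound $s^{-1}$ plus the entropy bound furnish volume non-collapsing and a uniform Sobolev inequality on $B_{\tilde g_i(t)}(x_i,1)$ for $t\in[\tfrac12,1]$; a single Moser iteration on $\Box|\Rm|\le 8|\Rm|^2$ then yields
\[
\e_i=|\Rm(\tilde g_i)(x_i,1)|\le C(n,A)\,\|\Rm(\tilde g_i)\|_{L^{n/2}}\le C(n,A)\,c_i\e_i,
\]
which is impossible for $c_i$ small. No ancient limit, no compactness, no $\e$-regularity theorem as a black box.

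Two points where your write-up goes astray. First, what you call ``the main obstacle'' --- preservation of the local $L^{n/2}$ bound along the flow --- is simply \emph{not} an issue in this proposition: hypothesis (b) is assumed at every time and every point, and it is parabolically scale-invariant, so it transfers to the rescaled flow with no work. (The preservation question is exactly what the \emph{rest} of the proof of Theorem~\ref{pseudo} addresses, via Lemma~\ref{Lemma:Lp pres} and the Sobolev inequality; you have imported that difficulty into the wrong proposition.) Second, you invoke a Ricci lower bound $\Ric(g_0)\ge -\lambda$ to ``absorb lower-order terms'', but no such bound is assumed in this proposition --- and none is needed, since the $s^{-1}$ curvature bound after rescaling already controls everything required for the Sobolev inequality via Wang's non-collapsing and Saloff-Coste's result.

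Your ``concretely, I would'' sketch at the end is in fact closer to the paper's argument than your limit-based opening; if you drop the limit, rescale by $t_i^{-1}$ rather than $|\Rm|(x_i,t_i)$, and recognize that (b) is a hypothesis rather than something to be propagated, you recover the paper's proof almost verbatim.
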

\begin{proof}
By rescaling, we may assume $T=1$. Suppose on the contrary that the result is not true. Then for some $A,\e >0$, we can find sequences of $\delta_i=c_i\e_i$ with $c_i\rightarrow 0$, $\e_i$ $\in (0,1)$ and $\{(M_i,g_i(t),p_i)\}$ with bounded curvature such that
\begin{enumerate}
    \item $\nu(B_{g_i(0)}(x,10^6n),g_i(0),2)\geq -A$;
    \item  $\left(\int_{B_t(x,\sqrt{t})} |\Rm(g_i(t))|^{n/2} d\mu_{i,t}\right)^{2/n}\leq \delta_i \rightarrow 0$ for all $(x,t)\in M_i\times [0,1]$
\end{enumerate}
 but for some $(x_i,t_i)\in M_i\times (0,1]$,
$$|\Rm_i(x_i,t_i)|=\e_i t_i^{-1}.$$

We may choose $t_i>0$ such that for all $(y,s)\in M_i\times (0,t_i)$, 
\begin{equation}
|\Rm_i(y,s)|<2\e_i s^{-1}. 
\end{equation} 
This can be done since the upper bound of curvature vary continuously by boundedness of curvature. Let $Q_i=t_i^{-1}\geq 1$. Consider the rescaled Ricci flow $\tilde g_i(t)=Q_i g_i(Q_i^{-1}t)$ for $t\in [0, 1]$ which satisfies 
\begin{enumerate}
\item[(a)] $\nu(B_{g_i(0)}(y,10^6n),\tilde g_i(0),2)\geq  -A$ for all $y\in M_i$;
\item[(b)] $\left(\int_{B_{\tilde g_i(t)}(y,\sqrt{t})}|\Rm(\tilde g_i(t))|^{n/2} d\tilde\mu_{i,t}\right)^{2/n}\leq \delta_i\rightarrow 0$ for all $(y,t)\in M_i\times [0,1]$;
\item[(c)] $|\Rm_{\tilde g_i}(y,s)|<  s^{-1}$ on $M_i\times (0, 1)$;
\item[(d)] $|\Rm_{\tilde g_i}(x_i,1)|=  \e_i$.
\end{enumerate}
By (a) and \cite[Theorem 5.4]{Wang-2018}, we have uniform lower bound of the entropy $\nu(B_{\tilde{g}_i(t)}(y,1),\tilde g_i(t),1)$. Now (c) and \cite[Theorem 3.3]{Wang-2018} implies an uniform lower bound of the volume of $B_{\tilde g_i(t)}(x_i, r)$ which depends only on $A$ and $n$ for any $r$, $t$ $\in [1/2,1]$. By the curvature bound (c), we also have uniform Sobolev inequality on $B_{\tilde g_i(t)}(x_i, 1)$, for 
$t\in [1/2,1]$ (see \cite{Saloff-Coste-1992} and \cite{PLi-2012}). It follows from Kato's inequality and the evolution equation of $\Rm$ under the Ricci flow that 
\be
\heat |\Rm| \leq 8|\Rm|^2.
\ee

Since the curvature is uniformly bounded on $[\frac12,1]$, we may apply the Moser iteration argument \cite{PLi-2012}, together with (b) and H\"{o}lder inequality  to show that 
\be
\begin{split}
\e_i&=|\Rm_{\tilde g_i}(x_i,1)|\\
&\leq C(n, A)\int^1_{1/2}\fint_{B_{\tilde g_i(t)}(x_i, {1/2})} |\Rm_{\tilde g_i}| d\mu_s ds\\
&\leq C'(n, A)\left(\int^1_{1/2}\fint_{B_{\tilde g_i(t)}(x_i, {1/2})} |\Rm_{\tilde g_i}|^{n/2} d\mu_s ds \right)^{2/n}\\
&\leq C'(n, A)  c_i \e_i  \to 0 \text{  as  } i\to \infty,
\end{split}
\ee
which is impossible if $c_i$ is too small. This completes the proof of the lemma.
\end{proof}

\begin{rem}
 The a-priori curvature boundedness and completeness is in fact unnecessary, see Section~\ref{gapsection}.
\end{rem}

Next, we will show that if the initial local $L^{n/2}$ is small enough, then it is preserved in some semi-local sense. We first begin with the energy evolution of $L^{n/2}$ norm. 
\begin{lma}\label{Lemma:Lp pres}Suppose $n\geq 3$ and $(M,g(t))$ is a complete solution to the Ricci flow, $t$ $\in [0, T]$. Then for any $\a\geq \frac n4$, $\b >0$ and $\phi(x,t)$ compactly supported function in spacetime, there exist positive constants $C(\a)$ and $C'(n,\a)$ such that
\begin{equation}\label{lemma21ineq}
    \begin{split}
        \frac{d}{dt}\int_M \phi^2(|\Rm|^{2}+\b)^{\a}d\mu_t \leq& -C(\a)\int_M|\nabla (\phi (|\Rm|^2+\b)^{\a /2})|^2d\mu_t\\
& +C'(n,\a)\int_M \phi^2(|\Rm|^{2}+\b)^{\a+1/2}d\mu_t\\
& +C'(n,\a)\int_M  |\nabla \phi|^2(|\Rm|^{2}+\b)^{\a}d\mu_t.\\
&+\int_M 2\phi\Box \phi (|\Rm|^{2}+\b)^{\a}d\mu_t,
    \end{split}
\end{equation}
where $\Box= \frac{\p}{\p t}-\Delta_{g(t)}$.
\end{lma}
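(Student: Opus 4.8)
The plan is to differentiate the weighted integral directly using the evolution equations of $|\Rm|$, the volume form, and the heat operator applied to $\phi$, then absorb the bad terms via an integration by parts. First I would recall the standard Ricci flow evolution inequality
\[
\Box |\Rm|^2 \leq -2|\nabla \Rm|^2 + c_n |\Rm|^3,
\]
which follows from Hamilton's evolution equation $\partial_t \Rm = \Delta \Rm + \Rm * \Rm$ together with $\partial_t d\mu_t = -R\, d\mu_t$ and the pointwise bound $|R| \leq c_n |\Rm|$. Setting $v := (|\Rm|^2 + \beta)^{1/2}$ (or working directly with $w := (|\Rm|^2+\beta)^{\alpha}$), one computes
\[
\Box\, (|\Rm|^2+\beta)^\alpha \leq -\alpha(\alpha-1)(|\Rm|^2+\beta)^{\alpha-2}\bigl|\nabla |\Rm|^2\bigr|^2 - 2\alpha(|\Rm|^2+\beta)^{\alpha-1}|\nabla\Rm|^2 + c_n\alpha (|\Rm|^2+\beta)^{\alpha+1/2},
\]
where the first term on the right has a favorable sign precisely when $\alpha \geq 1$; the hypothesis $\alpha \geq n/4 \geq 3/4$ is the natural threshold once one combines this with the Kato-type inequality $\bigl|\nabla |\Rm|^2\bigr|^2 \leq 4|\Rm|^2|\nabla \Rm|^2 \leq 4(|\Rm|^2+\beta)|\nabla\Rm|^2$, so that the gradient terms together still control $|\nabla (|\Rm|^2+\beta)^{\alpha/2}|^2$ up to the constant $C(\alpha)$.

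Next I would carry out the time differentiation of $\int_M \phi^2 (|\Rm|^2+\beta)^\alpha\, d\mu_t$, producing three contributions: the term where $\partial_t$ hits $\phi^2$, the term where it hits $(|\Rm|^2+\beta)^\alpha$, and the term $-\int \phi^2 R (|\Rm|^2+\beta)^\alpha\, d\mu_t$ from the volume form. I would rewrite $\partial_t[(|\Rm|^2+\beta)^\alpha] = \Box[(|\Rm|^2+\beta)^\alpha] + \Delta[(|\Rm|^2+\beta)^\alpha]$, and similarly handle the $\phi$-term by writing $\partial_t(\phi^2) = 2\phi\Box\phi + 2\phi\Delta\phi$ (which, paired with the volume form term, is the source of the last line $\int 2\phi\Box\phi (|\Rm|^2+\beta)^\alpha$; note the $-\phi^2 R(|\Rm|^2+\beta)^\alpha$ from $d\mu_t$ gets folded into the $\Box\phi$ expression). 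Integrating the two Laplacian contributions by parts gives
\[
\int_M \phi^2 \Delta[(|\Rm|^2+\beta)^\alpha] + 2\phi(\Delta\phi)(|\Rm|^2+\beta)^\alpha\, d\mu_t = -\int_M \nabla(\phi^2)\cdot\nabla[(|\Rm|^2+\beta)^\alpha] + 2\phi\Delta\phi\,(|\Rm|^2+\beta)^\alpha - \phi^2\Delta[(|\Rm|^2+\beta)^\alpha].
\]
The cleaner route is to combine everything into the identity
\[
\int_M \phi^2\Box[(|\Rm|^2+\beta)^\alpha]\,d\mu_t = \int_M (|\Rm|^2+\beta)^\alpha\bigl(-2|\nabla\phi|^2 + 2\phi\Box\phi\bigr)d\mu_t - 2\int_M \phi\nabla\phi\cdot\nabla[(|\Rm|^2+\beta)^\alpha]\,d\mu_t,
\]
after which the cross term $-2\int \phi\nabla\phi\cdot\nabla(w)$ with $w=(|\Rm|^2+\beta)^\alpha$ is handled by Cauchy–Schwarz and the elementary bound $|\nabla w| \leq C(\alpha)(|\Rm|^2+\beta)^{\alpha/2}|\nabla((|\Rm|^2+\beta)^{\alpha/2})|$, absorbing half of the good gradient term and leaving a leftover $C'(n,\alpha)\int|\nabla\phi|^2(|\Rm|^2+\beta)^\alpha$.

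The main obstacle, and the step requiring the most care, is the bookkeeping that ensures the negative gradient term $-C(\alpha)\int|\nabla(\phi(|\Rm|^2+\beta)^{\alpha/2})|^2$ survives with a definite positive constant: one must verify that the coefficient of $(|\Rm|^2+\beta)^{\alpha-1}|\nabla\Rm|^2$ coming from $\Box[(|\Rm|^2+\beta)^\alpha]$ (which is $2\alpha$, plus the $\alpha(\alpha-1)$-weighted Kato contribution when $\alpha\geq 1$) is large enough to both dominate the cross term from the cutoff and to reconstitute $|\nabla(\phi(|\Rm|^2+\beta)^{\alpha/2})|^2 = \phi^2|\nabla((|\Rm|^2+\beta)^{\alpha/2})|^2 + (\ldots)|\nabla\phi|^2 + (\text{cross})$ after expanding the square. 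Here the threshold $\alpha \geq n/4$ is used — although for the gradient positivity one only needs $\alpha \geq$ some dimensional constant; the $n/4$ is presumably tuned so that the exponent $\alpha + 1/2$ in the cubic term matches the critical scaling $n/2$ when $\alpha = n/4$ (i.e. $|\Rm|^{2\alpha+1} = |\Rm|^{n/2+1/2}$, relevant downstream). Once the gradient term is secured, the cubic term $C'(n,\alpha)\int\phi^2(|\Rm|^2+\beta)^{\alpha+1/2}$ is simply carried along, since controlling it is exactly the role of the smallness hypothesis in the subsequent Moser iteration. I would also remark that the computation is first justified for a smooth solution with bounded curvature on a compact manifold (or with compactly supported $\phi$ the integrations by parts have no boundary terms), and that $|\Rm|$ being only Lipschitz is handled in the usual way by passing to $|\Rm|^2+\beta$ which is smooth, which is precisely why $\beta > 0$ appears.
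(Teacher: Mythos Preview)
Your overall strategy matches the paper's: differentiate in time, use $\Box|\Rm|^2 \leq -2|\nabla\Rm|^2 + c_n|\Rm|^3$, expand via the product/chain rule for $\Box$, and absorb the cross term $\phi\,\nabla\phi\cdot\nabla(|\Rm|^2+\beta)^\alpha$ with Young's inequality to retain a definite negative gradient term. Two pieces of bookkeeping need correction, however.

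First, the volume-form contribution $-\phi^2 R(|\Rm|^2+\beta)^\alpha$ does \emph{not} fold into the $2\phi\,\Box\phi$ term; $\Box\phi = \partial_t\phi - \Delta\phi$ carries no scalar-curvature piece. The paper simply bounds $-R \leq c_n|\Rm| \leq c_n(|\Rm|^2+\beta)^{1/2}$ and sends this contribution into the cubic term $C'(n,\alpha)\int\phi^2(|\Rm|^2+\beta)^{\alpha+1/2}$. Second, your displayed identity for $\int\phi^2\Box[(|\Rm|^2+\beta)^\alpha]$ is not an identity as written (test it with time-independent $\phi$ and $w$). The cleaner route, which is what the paper does, is to write $\int_M \partial_t[\phi^2 w]\,d\mu_t = \int_M \Box[\phi^2 w]\,d\mu_t$ (the Laplacian integrates to zero for compactly supported integrand) and then expand $\Box[\phi^2 w]$ by the product rule $\Box(fg) = (\Box f)g + f\,\Box g - 2\nabla f\cdot\nabla g$, applied once with $f=\phi^2$, $g=w$, and once more with the chain rule on $w = u^\alpha$ where $u=|\Rm|^2+\beta$. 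This yields the five terms $2\phi(\Box\phi)w$, $-2|\nabla\phi|^2 w$, $\alpha\phi^2 u^{\alpha-1}\Box|\Rm|^2$, $-4\alpha(\alpha-1)\phi^2 u^{\alpha-2}|\Rm|^2|\nabla|\Rm||^2$, and the cross term $-8\alpha\phi\langle\nabla\phi,\nabla|\Rm|\rangle|\Rm|\,u^{\alpha-1}$, after which Kato and Young finish the job exactly as you outline. The threshold that makes the surviving gradient coefficient positive is in fact $\alpha > 1/2$, implied by $\alpha\geq n/4$ for $n\geq 3$; as you correctly observe, the exponent $n/4$ is chosen for the downstream critical scaling rather than for this step.
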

\begin{proof}We compute the time derivative of the integral norm as in \cite{Chen-2019}. Using $\frac{\p}{\p t}d\mu_t=-R d\mu_t\leq c(n)|\Rm| d\mu_t$, we have 

\begin{equation}
    \begin{split}
        \frac{d}{dt}\int_M \phi^2(|\Rm|^{2}+\b)^{\a}d\mu_t \leq& \int_M \frac{\p}{\p t} \left(\phi^2(|\Rm|^{2}+\b)^{\a}\right)d\mu_t  \\
&+ c(n)\int_M \phi^2(|\Rm|^{2}+\b)^{\a +1/2}d\mu_t .
    \end{split}
\end{equation}
For the first term on the R.H.S.,
\begin{equation}
    \begin{split}
&\quad  \int_M \frac{\p}{\p t} \left(\phi^2(|\Rm|^{2}+\b)^{\a}\right)d\mu_t\\
=&  \int_M \Box \left(\phi^2(|\Rm|^{2}+\b)^{\a}\right)d\mu_t\\
=&  \int_M 2\phi\Box \phi (|\Rm|^{2}+\b)^{\a}d\mu_t- \int_M 2|\nabla \phi|^2 (|\Rm|^{2}+\b)^{\a}d\mu_t\\
&+ \int_M \a\phi^2(|\Rm|^{2}+\b)^{\a-1}\Box |\Rm|^2d\mu_t\\
&- \int_M 4\a(\a-1)\phi^2(|\Rm|^{2}+\b)^{\a-2}|\Rm|^2|\nabla |\Rm||^2d\mu_t\\
&-\int_M 8\a\phi\la \nabla \phi, \nabla |\Rm|\ra |\Rm|(|\Rm|^{2}+\b)^{\a-1}d\mu_t,
    \end{split}
\end{equation}
where $\Box =\frac{\p}{\p t}-\Delta_{g(t)}$. To proceed, we apply the evolution equation of $|\Rm|^2$ (see \cite{Chen-2019} and ref. therein) 
\be\label{evolution of Rm sq}
\Box |\Rm|^2\leq -2|\nabla \Rm|^2+16 |\Rm|^3.
\ee
It follows from \eqref{evolution of Rm sq} that
\begin{equation}
    \begin{split}
 \int_M \a\phi^2(|\Rm|^{2}+\b)^{\a-1}\Box |\Rm|^2 d\mu_t\leq& -2\a\int_M \phi^2(|\Rm|^{2}+\b)^{\a-1}|\nabla \Rm|^2d\mu_t\\
&+ 16\a  \int_M \phi^2(|\Rm|^{2}+\b)^{\a +1/2}d\mu_t.
    \end{split}
\end{equation}
Hence by Kato's inequality and H\"{o}lder inequality,
\begin{eqnarray*}
&& \int_M \a\phi^2(|\Rm|^{2}+\b)^{\a-1}\Box |\Rm|^2 d\mu_t\\
&&- \int_M 4\a(\a-1)\phi^2(|\Rm|^{2}+\b)^{\a-2}|\Rm|^2|\nabla |\Rm||^2d\mu_t\\
&&-\int_M 8\a\phi\la \nabla \phi, \nabla |\Rm|\ra |\Rm|(|\Rm|^{2}+\b)^{\a-1}d\mu_t\\
&\leq& -C(\a)\int_M \phi^2(|\Rm|^{2}+\b)^{\a-1}|\nabla |\Rm||^2d\mu_t\\
&&+ 16\a  \int_M \phi^2(|\Rm|^{2}+\b)^{\a +1/2}d\mu_t\\
&&+\int_M 8\a\phi |\nabla \phi| |\nabla |\Rm|| |\Rm|(|\Rm|^{2}+\b)^{\a-1}d\mu_t\\
&\leq& -C'(\a)\int_M \phi^2(|\Rm|^{2}+\b)^{\a-2}|\Rm|^2|\nabla |\Rm||^2d\mu_t\\
&&+ 16\a  \int_M \phi^2(|\Rm|^{2}+\b)^{\a +1/2}d\mu_t\\
&&+C''(\a)\int_M |\nabla \phi|^2(|\Rm|^{2}+\b)^{\a}d\mu_t.
\end{eqnarray*}
We also have by Cauchy Schwarz inequality
\begin{equation}
    \begin{split}
&\quad C'(\a)\int_M \phi^2(|\Rm|^{2}+\b)^{\a-2}|\Rm|^2|\nabla |\Rm||^2d\mu_t\\
&= C'''(\a) \int_M \phi^2 |\nabla (|\Rm|^{2}+\b)^{\frac{\a}{2}} |^2d\mu_t\\
&\geq \frac{C'''(\a)}{2}\int_M  |\nabla (\phi (|\Rm|^{2}+\b)^{\frac{\a}{2}}) |^2d\mu_t- C'''(\a)\int_M |\nabla \phi|^2(|\Rm|^{2}+\b)^{\a}d\mu_t.
    \end{split}
\end{equation}

All in all,
\begin{equation}
    \begin{split}
        \frac{d}{dt}\int_M \phi^2(|\Rm|^{2}+\b)^{\a}d\mu_t 
&\leq C(n, \a)\int_M \phi^2(|\Rm|^{2}+\b)^{\a +1/2}d\mu_t \\
&\quad +  \int_M 2\phi\Box \phi (|\Rm|^{2}+\b)^{\a}d\mu_t\\
&\quad+ C(\a)\int_M 2|\nabla \phi|^2 (|\Rm|^{2}+\b)^{\a}d\mu_t\\
&\quad -\frac{C'''(\a)}{2}\int_M  |\nabla (\phi (|\Rm|^{2}+\b)^{\frac{\a}{2}}) |^2d\mu_t.
    \end{split}
\end{equation}
Our desired inequality \eqref{lemma21ineq} then follows.
\end{proof}

We also need the following lemma showing that the local entropy implies local Sobolev inequality. 
\begin{lma}\label{Lemma-Sobo}
For all $A\geq 1000n,\lambda>0$ and $\delta>0$, there are positive constants $C_S(n,A,\lambda, \delta)$ and $\hat T(n,A,\lambda, \delta)$ such that the following holds. Suppose $(M,g(t))$ is a complete Ricci flow with bounded curvature on $[0,T]$ and for all $p\in M$ and all $t\in (0,T]$, all of the following conditions are satisfied
\begin{enumerate}
\item $R_{g(0)}\geq -n\lambda$;
\item $\nu(B_0(p, 5), g(0),1)\geq -A$;
\item $\Ric(p, t)\leq \delta t^{-1}$.
\end{enumerate}
Then we have for any $p$ $\in M$ and  $t$ $\in (0,  \min\{T,\hat T\}]$,
\be\label{entropy lower bound preserved 2}
\nu(B_t(p, 2), g(t), 32^{-1})\geq -2A
\ee
and for any $\varphi$ $\in C^{\infty}_0(B_t(p, 2))$
\be\label{uniform Sobolev inequality}
\left(\int_{B_t(p, 2)}|\varphi|^{\frac{2n}{n-2}} d\mu_t \right)^{\frac{n-2}{n}}\leq C_S \left( \int_{B_t(p, 2)} |\nabla \varphi|^2+ (R+c_n\lambda +1) \varphi^2 d\mu_t \right).
\ee
\end{lma}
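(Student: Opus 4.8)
The plan is to deduce the local Sobolev inequality \eqref{uniform Sobolev inequality} from a preserved local entropy bound \eqref{entropy lower bound preserved 2}, following the strategy of Wang \cite{Wang-2018}. The first and main step is to establish \eqref{entropy lower bound preserved 2}. For this I would invoke the almost-monotonicity of Wang's localized entropy under Ricci flow, which requires an upper Ricci bound on the flow (hypothesis (3), $\Ric(p,t)\le \delta t^{-1}$) to control how the cutoff region $B_t(p,2)$ shifts relative to $B_0(p,5)$ as time increases. Concretely, under $\Ric\le \delta t^{-1}$ one has a distance-distortion estimate of the form $d_{g(t)}(x,y)\le e^{C\delta}\,d_{g(0)}(x,y)$ plus a correction of order $\sqrt{t}$ (Hamilton--Perelman type shrinking-balls / changing-distances estimate), so that for $t\le \hat T(n,A,\lambda,\delta)$ small enough we have the inclusion $B_t(p,2)\subset B_0(p,5)$. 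Combined with the scalar lower bound (hypothesis (1)) to handle the $R u^2$ term in $W$, and Wang's monotonicity \cite[Theorem 5.4]{Wang-2018} (the same result cited in the proof of Proposition \ref{curv-esti}), one gets $\nu(B_t(p,2),g(t),32^{-1})\ge \nu(B_0(p,5),g(0),1)-\text{(small error)}\ge -2A$ once $\hat T$ is chosen small relative to $A$. The choice of the scale $32^{-1}$ is dictated by how the scale parameter $\tau$ transforms under the parabolic rescaling implicit in the monotonicity, together with the factor relating radii $5$ and $2$.

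The second step is the passage from the local entropy bound to the local log-Sobolev inequality, and then to \eqref{uniform Sobolev inequality}. Unwinding the definition \eqref{def of local nu} of $W(\Omega,g,u,\tau)$, the bound $\nu(B_t(p,2),g(t),32^{-1})\ge -2A$ says precisely that for all $\tau\in(0,32^{-1}]$ and all $u\in D_{g(t)}(B_t(p,2))$,
\[
\int_{B_t(p,2)}\tau\bigl(Ru^2+4|\nabla u|^2\bigr)-2u^2\log u\,d\mu_t \;\ge\; \tfrac{n}{2}\log(4\pi\tau)+n-2A .
\]
This is a log-Sobolev inequality on $B_t(p,2)$ with a uniform constant depending only on $n$ and $A$ (and, through the $Ru^2$ term together with hypothesis (1), on $\lambda$; note $R$ can be bounded below on $B_t(p,2)$ using again the changing-distances argument and the scalar lower bound, whose flow-preservation is classical: $R_{g(t)}\ge -n\lambda/(1+2\lambda t)\ge -c_n\lambda$ for $t\le\hat T$). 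A log-Sobolev inequality of this type, valid for all small $\tau$ with a fixed constant, is well known to be equivalent to a Sobolev inequality of Euclidean exponent $\frac{2n}{n-2}$; one standard route is to run Davies' semigroup / heat-kernel argument (ultracontractivity of $e^{t\Delta}$ on $B_t(p,2)$ follows from the family of log-Sobolev inequalities), or alternatively to cite \cite{Wang-2018} where this implication is carried out. Absorbing $R$ into the potential by adding $(c_n\lambda+1)\varphi^2$ on the right then yields \eqref{uniform Sobolev inequality} with $C_S=C_S(n,A,\lambda)$.

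I expect the main obstacle to be the first step — tracking the interplay between the three scales ($5$ for the initial entropy ball, $2$ for the time-$t$ ball, $32^{-1}$ for the entropy scale) through Wang's monotonicity while only assuming the one-sided Ricci bound $\Ric\le\delta t^{-1}$. The delicate point is that $B_t(p,2)$ need not be contained in $B_0(p,5)$ a priori: one must use the upper Ricci bound to show balls do not expand too fast, and the scalar lower bound to ensure the entropy functional restricted to the larger initial ball still dominates the relevant quantity. Getting the dependence of $\hat T$ and $C_S$ right (only on $n,A,\lambda,\delta$, with no dependence on $T$ or on the curvature bound of the flow) requires that every estimate be scale-invariant or explicitly localized, which is exactly the content one imports from \cite{Wang-2018}; the remainder is bookkeeping.
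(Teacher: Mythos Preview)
Your proposal is correct and matches the paper's approach: invoke Wang's almost-monotonicity \cite[Theorem 5.4]{Wang-2018} (which already packages the ball-inclusion and scale bookkeeping you flag as the main obstacle, so that \eqref{entropy lower bound preserved 2} is essentially a one-line citation) and then pass from the resulting log-Sobolev inequality to \eqref{uniform Sobolev inequality} via the Davies--Ye semigroup argument, stated in the paper as Theorems~\ref{sgp L2L1 est} and~\ref{L2 sgp est to Sob ineq}. One small slip: your distance-distortion inequality points the wrong way---to get $B_t(p,2)\subset B_0(p,5)$ you need $d_0\le d_t+C_n\sqrt{t}$ from the Hamilton--Perelman estimate under $\Ric\le \delta t^{-1}$, not an upper bound on $d_t$ in terms of $d_0$.
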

\begin{proof}
For \eqref{entropy lower bound preserved 2}, we apply \cite[Theorem 5.4]{Wang-2018} to get for all small $t\leq \min\{ \hat T(n,A, \delta), T\}$
\be
\begin{split}
\nu(B_t(p, 2), g(t), 32^{-1})&\geq \nu(B_0(p, 5), g(0), t+32^{-1})-16t\\
&\geq  \nu(B_0(p, 5), g(0), 1)-A\\
&\geq -2A
\end{split}
\ee
This completes the proof of \eqref{entropy lower bound preserved 2}. Since the Ricci flow has bounded curvature, it follows from the maximum principle that there exists a dimensional constant $c_n$ such that for all $t\leq \min\{ \hat T(n,A, \delta), T\}$
\be
R(x,t)\geq-c_n\lambda. 
\ee
By the  definition of local $\nu$ entropy \eqref{def of local nu} and \eqref{entropy lower bound preserved 2}, we have a uniform Log Sobolev inequality: for any $\tau$ $\in (0,32^{-1})$, $u \in W^{1,2}_0(B_t(p, 2))$, with $\|u\|_{g(t), 2}=1$,
\be\label{uniform log Sobolev inequality from entropy}
\begin{split}
\int_{B_t(p, 2)}u^2\log u^2    d\mu_t\leq&  \tau \int_{B_t(p, 2)}4|\nabla u|^2+Ru^2 d\mu_t\\
&-\frac{n}{2}\log(4\pi\tau)-n+2A.
\end{split}
\ee
The uniform Log Sobolev inequality then implies a uniform Sobolev inequality along the Ricci flow as first described in \cite{Zhang07} (see also \cite{Chen-2019,Ye-2015} and Theorems \ref{sgp L2L1 est} and \ref{L2 sgp est to Sob ineq}).
Indeed, when ${\p B_t(p, 2)}$ is nonempty, the same arguments as in \cite{Davies-1989, Ye-2015} will give us Theorems \ref{sgp L2L1 est} and \ref{L2 sgp est to Sob ineq} below for the Dirichlet Sobolev inequality, and these together with \eqref{uniform log Sobolev inequality from entropy} imply \eqref{uniform Sobolev inequality}, finishing the proof.
\end{proof}

 We shall now state Theorems \ref{sgp L2L1 est} and \ref{L2 sgp est to Sob ineq} without mentioning the proofs, since they are essentially the same as those found in \cite{Davies-1989,Ye-2015}. Let $(N, h)$ be a smooth compact Riemannian manifold with metric $h$ and smooth boundary $\p N$, $H$ the elliptic operator $=-\Delta +4^{-1}(R+c_n\lambda)$, where $\lambda$ and $c_n$  are non-negative constants such that $R\geq -c_n\lambda$ on $N$,
\be
Q(u, v)= \int_{N} \nabla u\cdot \nabla v+ 4^{-1}(R+c_n\lambda) u\cdot v d\mu_h
\ee
 and write $Q(u, u)$ as $Q(u)$. For $t>0$, consider the semigroup $e^{-tH}$ of the operator $H$. For any $u_0\in L^2(N, h)$, the function $u(t):=e^{-tH}u_0$ is the solution to the Dirichlet evolution equation
\be\label{heat equation for H}
\left\{
\begin{array}{ll}
     &  \frac{\p u}{\p t}=-H u\\
     & u(0)= u_0\\
     &u= 0 \text{  on } \p N.
\end{array}
\right.
\ee

\begin{thm}[\cite{Davies-1989, Ye-2015}]\label{sgp L2L1 est}
Let $\sigma^*$ $\in (0,\infty]$. Suppose that for all $\sigma$ $\in (0,\sigma^*)$,
\be
\int_N u^2\log u^2 d\mu\leq \sigma\int_N |\nabla u|^2+4^{-1}(R+c_n \lambda)u^2 d\mu+ \b(\sigma)
\ee\label{log Sob start}
is true for any $u$ $\in W^{1,2}_0(N)$ with $\|u\|_2=1$, where $\b$ is a continuous nonincreasing function and $R+c_n \lambda\geq 0$. If in addition the function,
\be
\tau(t):=\frac{1}{2t}\int_0^t\b(s) ds
\ee
is finite for any $t$ $\in (0, \sigma^*)$. Then for each $u$ $\in L^2(N)$
\be\label{semigp L2 est}
\|e^{-tH}u\|_{\infty}\leq e^{\tau(t)}\|u\|_2
\ee
for $t \in (0,\sigma^*/4)$. Moreover, for all  $u$ $\in L^1(N)$
\be\label{semigp L1 est}
\|e^{-tH}u\|_{\infty}\leq e^{2\tau(\frac{t}{2})}\|u\|_1
\ee
for $t \in (0,\sigma^*/4)$. 
\end{thm}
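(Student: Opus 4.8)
The plan is to run the classical hypercontractivity argument of Davies \cite{Davies-1989} (see also \cite{Ye-2015}). Write $V:=4^{-1}(R+c_n\lambda)\ge0$, so that $H=-\Delta+V$ with Dirichlet conditions on the compact manifold $N$; I would take for granted the standard facts that $e^{-sH}$ is positivity preserving and is $L^1$- and $L^\infty$-contractive, and that on the compact $N$ it is instantly smoothing, so that for any $0\ne f\in L^1(N)\cap L^2(N)$ the function $\phi(s):=\|e^{-sH}f\|_2^2$ is smooth, positive and nonincreasing on $[0,t]$, with $\phi(0)=\|f\|_2^2<\infty$.

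First I would reformulate the hypothesis. Applying the given Gross-type log-Sobolev inequality to $f/\|f\|_2$ and invoking Jensen's inequality in the form $\int_N f^2\log f^2\,d\mu_h\ge\|f\|_2^2\log(\|f\|_2^4/\|f\|_1^2)$ yields, with the \emph{same} function $\beta$,
\[
\|f\|_2^2\,\log\frac{\|f\|_2^2}{\|f\|_1^2}\ \le\ \sigma\,Q(f)+\beta(\sigma)\,\|f\|_2^2,\qquad f\in W^{1,2}_0(N),\quad 0<\sigma<\sigma^*.
\]
Then, for $0\le f\in L^1(N)\cap L^2(N)$ with $\|f\|_1=1$ and $u(s):=e^{-sH}f$, one has $\phi'(s)=-2\,Q(u(s))$ and $\|u(s)\|_1\le1$; feeding $u(s)$ into the displayed inequality at the time-dependent scale $\sigma=s$ (admissible since $s\le t$ stays in the stated range) and using $\|u(s)\|_1\le1$ to replace $\log(\|u\|_2^2/\|u\|_1^2)$ by $\log\|u\|_2^2$ gives the linear differential inequality $\psi'\le-\tfrac{2}{s}\psi+\tfrac{2}{s}\beta(s)$ for $\psi:=\log\phi$. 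Multiplying by the integrating factor $s^2$, integrating over $[\varepsilon,t]$, bounding $\phi(\varepsilon)\le\phi(0)=\|f\|_2^2$, and letting $\varepsilon\to0$ so that the boundary term $\tfrac{\varepsilon^2}{t^2}\psi(\varepsilon)$ drops out, leads to $\log\phi(t)\le\tfrac{2}{t^2}\int_0^t s\,\beta(s)\,ds$.

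To conclude, since $s\mapsto s$ is increasing and $s\mapsto\beta(s)$ is nonincreasing, Chebyshev's integral inequality gives $\int_0^t s\,\beta(s)\,ds\le\tfrac{t}{2}\int_0^t\beta(s)\,ds$, hence $\log\phi(t)\le\tfrac1t\int_0^t\beta(s)\,ds=2\tau(t)$, i.e.\ $\|e^{-tH}f\|_2\le e^{\tau(t)}\|f\|_1$. By positivity preservation and density this holds for every $f\in L^1(N)$, so $\|e^{-tH}\|_{L^1\to L^2}\le e^{\tau(t)}$; since $e^{-tH}$ is self-adjoint on $L^2(N)$, this equals $\|e^{-tH}\|_{L^2\to L^\infty}$, which is \eqref{semigp L2 est}. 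Factoring $e^{-tH}=e^{-\frac{t}{2}H}\circ e^{-\frac{t}{2}H}$ then gives $\|e^{-tH}\|_{L^1\to L^\infty}\le\|e^{-\frac{t}{2}H}\|_{L^2\to L^\infty}\,\|e^{-\frac{t}{2}H}\|_{L^1\to L^2}\le e^{2\tau(t/2)}$, which is \eqref{semigp L1 est}.

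I expect the only delicate points to lie in the energy-ODE step, and I would treat them exactly as in \cite{Davies-1989,Ye-2015}: justifying the limit $\varepsilon\to0$ — which works precisely because $s^{-1}$ is non-integrable at $0$, so that the finite but otherwise uncontrolled contribution of the initial datum $\psi(\varepsilon)$ washes out — and verifying that the scale choice $\sigma(s)=s$ is exactly the one that, after Chebyshev's inequality, reproduces the constant $\tau(t)=\tfrac{1}{2t}\int_0^t\beta(s)\,ds$ rather than a weaker one. Everything else (Jensen's inequality, the contractivity and smoothing properties of $e^{-sH}$, and the two duality/semigroup manipulations) is routine.
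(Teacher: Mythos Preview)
Your argument is correct but follows a genuinely different route from the paper. The paper (following Davies and Ye) runs the standard \emph{hypercontractivity} scheme: one fixes a time horizon $t$, lets the exponent evolve according to $p(s)=\frac{2t}{t-s}$, and differentiates $\log\big(e^{-N(s)}\|e^{-sH}u\|_{p(s)}\big)$ to show it is nonincreasing; letting $s\to t$ (so $p\to\infty$) gives the $L^2\to L^\infty$ bound \eqref{semigp L2 est} directly, and \eqref{semigp L1 est} then follows by duality and the semigroup factorisation.

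You instead run a \emph{Nash-type} argument: you stay at $p=2$, use Jensen (equivalently the relative-entropy inequality between the densities $f^2/\|f\|_2^2$ and $f/\|f\|_1$) to turn the log-Sobolev hypothesis into a differential inequality for $\psi(s)=\log\|e^{-sH}f\|_2^2$, solve that ODE, and then invoke Chebyshev's sum inequality on $\int_0^t s\,\beta(s)\,ds$ to land on exactly the constant $\tau(t)$. This yields the $L^1\to L^2$ bound first, and you reach \eqref{semigp L2 est} by self-adjointness; \eqref{semigp L1 est} follows as in the paper. Your approach is a bit more elementary---no $L^p$-calculus with time-varying exponent---at the cost of needing the extra Chebyshev step; the paper's hypercontractivity computation instead hits the exact constant $\tau(t)$ without any further inequality. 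Both are standard and both prove the stated result on the stated range of $t$ (indeed your choice $\sigma(s)=s$ only needs $t<\sigma^*$, so you even get a slightly larger interval than required).
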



\begin{thm}[\cite{Davies-1989, Ye-2015}]\label{L2 sgp est to Sob ineq} Suppose there exist positive constants $c_1$ and $t_1$ such that for all $t$ $\in (0, t_1)$ and $u$ $\in L^2(N)$
\be\label{assump finite t}
\|e^{-tH}u\|_{\infty}\leq c_1t^{-\frac{n}{4}}\|u\|_2.
\ee
Set $H_0=H+1$. Then for some constant $C(c_1, t_1 , n)$,
\be
\|H_0^{-1/2}u\|_{\frac{2n}{n-2}}\leq C\|u\|_2,
\ee
for any  $u$ $\in L^2(N)$. In particular, 
\be
\begin{split}
\|u\|_{\frac{2n}{n-2}}^2&\leq C^2\|H^{1/2}_0 u\|_2^2\\
&\leq C^2(Q(u)+\|u\|_2^2).
\end{split}
\ee
for all $u$ $\in W^{1, 2}_0(N)$, where $H_0^{1/2}$ and $H_0^{-1/2}$ denote the fractional operator of $H_0$ and its inverse respectively (see \cite{Davies-1989, Ye-2015}).
\end{thm}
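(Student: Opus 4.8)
This is the classical equivalence, due to Varopoulos and Davies, between ultracontractivity of a semigroup and Sobolev inequalities, and the plan is to reproduce their argument as in \cite{Davies-1989, Ye-2015}. The first step is to bootstrap the hypothesis into a clean, all-time $L^1\to L^\infty$ bound. Because $R+c_n\lambda\ge 0$ on $N$, the potential term of $H$ is nonnegative, so $e^{-tH}$ is a contraction on every $L^r(N)$, $1\le r\le\infty$ (Feynman--Kac, or the Beurling--Deny criteria); in particular $\|e^{-tH}\|_{L^1\to L^1}\le 1$, and since $H\ge 0$ and hence $H_0=H+1\ge 1$ we have $\|e^{-tH_0}\|_{L^2\to L^2}\le e^{-t}$. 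The operators $e^{-tH}$ are self-adjoint on $L^2$, so the hypothesis $\|e^{-tH}\|_{L^2\to L^\infty}\le c_1 t^{-n/4}$ ($t<t_1$) dualizes to the same bound on $\|e^{-tH}\|_{L^1\to L^2}$, and composing the two after halving $t$ gives $\|e^{-tH}\|_{L^1\to L^\infty}\le c_1^2 2^{n/2}t^{-n/2}$ for $t<t_1$. For $t\ge t_1$ I would write $e^{-tH_0}=e^{-t}\,e^{-(t_1/2)H}\,e^{-(t-t_1/2)H}$ and use the $L^1$-contractivity of the last factor together with the boundedness of $t\mapsto e^{-t}t^{n/2}$ to obtain $\|e^{-tH_0}\|_{L^1\to L^\infty}\le C_1(c_1,t_1,n)\,t^{-n/2}$ for all $t>0$; interpolating this against $\|e^{-tH_0}\|_{L^p\to L^p}\le 1$ also yields $\|e^{-tH_0}\|_{L^p\to L^\infty}\le C_1^{1/p}t^{-n/(2p)}$ for every $p\in[1,\infty]$ and every $t>0$.

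The second step is Davies' splitting applied to the subordination identity
\[
H_0^{-1/2}=\frac{1}{\sqrt\pi}\int_0^\infty t^{-1/2}\,e^{-tH_0}\,dt,
\]
valid on $L^2(N)$ since $H_0\ge 1$. Fixing $p$ with $1\le p<n$ and $u\in L^p(N)$, for $\lambda>0$ I split the integral at $t=\lambda$ as $H_0^{-1/2}u=I_\lambda+II_\lambda$. Using $L^p$-contractivity, $\|I_\lambda\|_{L^p}\le 2\pi^{-1/2}\lambda^{1/2}\|u\|_{L^p}$; factoring $e^{-tH_0}=e^{-(t/2)H_0}e^{-(t/2)H_0}$ and invoking the bound from Step 1 gives $\|II_\lambda\|_{L^\infty}\le C\lambda^{1/2-n/(2p)}\|u\|_{L^p}$, the integral $\int_\lambda^\infty t^{-1/2-n/(2p)}\,dt$ converging precisely because $p<n$ — this is where the assumption $n\ge 3$ is used, via $p=2$. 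Choosing $\lambda$ so that $\|II_\lambda\|_{L^\infty}\le s/2$ and bounding the distribution function of $H_0^{-1/2}u$ by Chebyshev's inequality applied to $I_\lambda$ then produces the weak-type estimate $\|H_0^{-1/2}u\|_{L^{q_p,\infty}(N)}\le C(c_1,t_1,n,p)\,\|u\|_{L^p}$ with $\tfrac1{q_p}=\tfrac1p-\tfrac1n$.

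For the third step, since this weak-type bound holds for every $p\in[1,n)$, in particular for a pair $p_0<2<p_1<n$, the Marcinkiewicz interpolation theorem upgrades it to the strong estimate $\|H_0^{-1/2}u\|_{L^{2n/(n-2)}}\le C\|u\|_{L^2}$, which is the first assertion. The second assertion is then formal: the form domain of $H_0$ is $W^{1,2}_0(N)$ with $\|H_0^{1/2}v\|_{L^2}^2=Q(v)+\|v\|_{L^2}^2$, so applying the bound just proved to $u=H_0^{1/2}v$ gives $\|v\|_{L^{2n/(n-2)}}\le C\|H_0^{1/2}v\|_{L^2}$, i.e. the stated Sobolev inequality. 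The one real subtlety — more bookkeeping than obstacle, and the reason the paper defers to \cite{Davies-1989,Ye-2015} — is the weak-to-strong passage: one cannot reach the endpoint $L^{2n/(n-2)}$ by interpolating two estimates that share the source space $L^2$, so it is essential that Step 2 produced the entire one-parameter family of weak-type bounds, and one must keep track that the whole semigroup functional calculus ($L^\infty$-contractivity from $R+c_n\lambda\ge0$, self-adjointness, fractional powers, the subordination formula) is being run on the compact manifold-with-boundary $N$ with Dirichlet conditions, where it is all standard.
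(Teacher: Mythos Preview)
Your proof is correct and follows essentially the same route as the paper's own argument (recorded in the draft appendix and deferred to \cite{Davies-1989,Ye-2015} in the main text): extend the short-time $L^2\to L^\infty$ bound to an all-time $L^p\to L^\infty$ bound for $e^{-tH_0}$ via self-adjointness, $L^p$-contractivity (from $R+c_n\lambda\ge 0$), and the shift $H\mapsto H_0=H+1$; then split the subordination integral $H_0^{-1/2}=\Gamma(1/2)^{-1}\int_0^\infty t^{-1/2}e^{-tH_0}\,dt$ at a threshold to obtain weak-type $(p,\tfrac{np}{n-p})$ bounds, and interpolate by Marcinkiewicz between two exponents straddling $p=2$ to reach the strong $L^2\to L^{2n/(n-2)}$ bound. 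The only cosmetic differences are that the paper extends the $L^2\to L^\infty$ bound first (writing $t=\tfrac{m t_1}{2}+t_0$) and then derives the $L^1\to L^\infty$ and $(p,\infty)$ bounds inside a separate lemma, and that it records the explicit interpolation exponents $p_0=\tfrac{4(n-1)}{3n-4}$, $p_1=\tfrac{4(n-1)}{n}$.
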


With Proposition \ref{Lemma:Lp pres} and Lemmas \ref{curv-esti} and \ref{Lemma-Sobo} in hand, we can now prove Theorem~\ref{pseudo} to conclude this section.
\begin{proof}[Proof of Theorem~\ref{pseudo}]
Let $\Lambda$ be a constant to be chosen later. Since $g(t)$ has bounded curvature, we may let $\hat T$ be the maximal time such that for all $(x,t)\in M\times [0,\hat T\wedge T)$, we have
\begin{eqnarray}
\left(\int_{B_{g(t)}(x,1)}|\Rm(g(t))|^{n/2}d\mu_t\right)^{2/n} \leq  \Lambda \e. 
\end{eqnarray}
By \cite[Theorem 3.3]{Wang-2018}, Proposition~\ref{curv-esti} and the injectivity radius estimates in \cite{CheegerGromovTaylor-1982}, it suffices to show that $\hat T$ is bounded from below uniformly if $\e$ and $\Lambda$ are chosen appropriately. 

We may choose $\e$ small enough so that $\Lambda \e=\delta<1$ is small. Therefore, the Ricci flow $g(t)$ satisfies
\begin{enumerate}
    \item $\left(\int_{B_{g_0}(x,2)}|\Rm(g(0))|^{n/2}d\mu_0\right)^{2/n} \leq  \e$;
    \item $\left(\int_{B_{g(t)}(x,1)}|\Rm(g(t))|^{n/2}d\mu_t\right)^{2/n} \leq \delta$;
    \item $\nu(B_{g_0}(x,5),g_0,1)\geq -A$;
    \item $\Ric(g_0)\geq -\lambda$
\end{enumerate}
for all $(x,t)\in M\times [0,T\wedge \hat T)$. By the monotonicity of the local entropy over domain and scale, we may assume $\hat T<25n^{-2}10^{-12}$ so that applying Proposition~\ref{curv-esti} on $g(t),t\in [0,T\wedge \hat T)$ yields 
\begin{equation}\label{cur-apr}
    \sup_M |\Rm(g(t))|\leq c(n,A)\delta t^{-1}<t^{-1}.
\end{equation}

Now we are ready to estimate $\hat T$. For any $x_0\in M$, we let $\eta(x,t)=d_t(x,x_0)+c_n\sqrt{t}$ and define $\phi(x,t)=e^{-10t}\varphi(\eta(x,t))$ where $\varphi(s)$ is a cutoff function on $\mathbb{R}$ so that $\varphi\equiv 1$ on $(-\infty,\frac12]$, $\varphi\equiv 0$ outside $(-\infty,1]$ and satisfies $\varphi''\geq -10\varphi $, $0\geq \varphi'\geq -10\sqrt{\varphi}$. By choosing $c_n$ large enough, we have from \cite[Lemma 8.3]{Perelman-2002} that  
\begin{eqnarray}
\heat \phi \leq 0
\end{eqnarray}
in the sense of barrier and hence in the sense of distribution, see \cite[Appendix A]{Mantegazza2014}.

Using Lemma~\ref{Lemma:Lp pres} with the above choice of $\phi$ and $\alpha=n/4$, we conclude that 
\begin{equation}
    \begin{split}
           \frac{d}{dt}\int_M \phi^2(|\Rm|^{2}+\b)^{n/4}d\mu_t \leq& -C_n^{-1}\int_M|\nabla (\phi (|\Rm|^2+\b)^{n/8})|^2d\mu_t\\
& +C_n\int_M \phi^2(|\Rm|^{2}+\b)^{n/4+1/2}d\mu_t\\
& +C_n\int_{supp(\phi)}  (|\Rm|^{2}+\b)^{n/4}d\mu_t.
\end{split}
\end{equation}

Noted that $\phi$ is supported on $B_t(x_0,1)$, 
By Lemma~\ref{Lemma-Sobo}, the first term on the right can be estimated as 
\begin{equation}
    \begin{split}
       &\quad  C_n^{-1}\int_M|\nabla (\phi (|\Rm|^2+\b)^{n/8})|^2d\mu_t \\
        &\geq  C_1(n,A) \left( \int_M |(\phi^2 (|\Rm|^2+\b)^{n/4})|^\frac{n}{n-2}d\mu_t  \right)^\frac{n-2}{n}\\
        &\quad - C_1(n,A)\int_{M}\phi^2 (R+c_n\lambda+1) (|\Rm|^2+\beta)^{n/4} d\mu_t
    \end{split}
\end{equation}
while the second term can be estimated by 
\begin{equation}
    \begin{split}
        &\quad C_n\int_M \phi^2(|\Rm|^{2}+\b)^{n/4+1/2}d\mu_t\\
        &\leq C_n\left(\int_{supp(\phi )} (|\Rm|^2+\beta)^\frac{n}{4} d\mu_t \right)^\frac{2}{n}\left(\int_M \left[\phi^2 (|\Rm|^2+\beta)^\frac{n}{4} \right]^\frac{n}{n-2} d\mu_t  \right)^\frac{n-2}{n}\\
        &\leq C_n \delta\left(\int_M \left[\phi^2 (|\Rm|^2+\beta)^\frac{n}{4} \right]^\frac{n}{n-2} d\mu_t  \right)^\frac{n-2}{n}
    \end{split}
\end{equation}
as $\beta\rightarrow 0$. We can apply the same argument to $\int_M \phi^2 R (|\Rm|^2+\beta)^{n/4} d\mu_t$ to deduce the same upper bound. Therefore, we conclude that if $\delta\leq \sigma(n,A)<<1$, then as $\beta\rightarrow 0$ we have 
\begin{equation}
    \begin{split}
           \frac{d}{dt}\int_M \phi^2(|\Rm|^{2}+\b)^{n/4}d\mu_t \leq
& C(n,A, \lambda)\int_{B_t(x_0,1)}  (|\Rm|^{2}+\b)^{n/4}d\mu_t\\
\leq & C(n,A, \lambda) \delta^{\frac{n}{2}}
\end{split}
\end{equation}

By letting $\beta\to 0$ together with the assumption on the initial metric, we conclude that for all $(x_0,t)\in M\times [0,T\wedge \hat T)$,
\begin{eqnarray}
\int_{B_t(x_0,\frac14)}|\Rm|^{n/2} d\mu_t \leq e^{10\hat T}(C_1(n,A, \lambda) \Lambda^{\frac{n}{2}} t +1) \e^{\frac{n}{2}}. 
\end{eqnarray}

Now we claim that there is $\tilde T(n,A)$ depending only on $n,A$ such that for all $(y,t)\in M\times [0,T\wedge \hat T]$, if $T\wedge \hat T\leq \tilde T(n,A)$, then we have 
\begin{eqnarray}\label{ball-inc}
B_t(y,1)\subset \bigcup_{i=1}^N B_t(x_i,\frac14)
\end{eqnarray}
for some $N(n,A,\lambda)\in \mathbb{N}$. If the claim is true, then we conclude that  for all $(y,t)\in M\times [0,T\wedge \hat T\wedge \tilde T(n,A)]$, 
\begin{equation}
    \begin{split}
        \int_{B_t(y,1 )}|\Rm|^{n/2} d\mu_t 
        &\leq  \sum_{i=1}^N \int_{B_t(x_i,\frac14 )}|\Rm|^{n/2} d\mu_t \\
        &\leq Ne^{10\tilde T} (C_1 \Lambda^{\frac{n}{2}} \tilde T +1) \e^{\frac{n}{2}}. 
    \end{split}
\end{equation}

Therefore if we choose $\Lambda^{n/2}=4N$ and further require $\tilde T\leq (4^{\frac{n}{2}}N^{\frac{n}{2}}C_1)^{-1}$, then we have contradiction and hence $\hat T\geq \tilde T(n,A,\lambda)$. This will complete the proof. Hence, it remains to establish the uniform covering. 

For each $(y,t)\times M\times [0,T\wedge \hat T\wedge \tilde T)$, we let $\{x_i\}_{i=1}^N$ be a maximal set of points in $B_t(y,1)$ such that $B_t(x_i,\frac18)$ are  disjoint from each other and \eqref{ball-inc} holds. By \eqref{cur-apr} and distance distortion estimates \cite[Lemma 8.3]{Perelman-2002}, we have $B_t(y,1)\subset B_0(y,2)$ if $\tilde T$ is small. At the same time, by choosing $\delta$ sufficiently small, we may apply the proof of \cite[Lemma 2.4]{HuangKongRongXu2018} (see also \cite[Lemma 2.2]{LeeTam2020}) to show that $B_t(x_i,\frac18)\supset B_0(x_i,r_0)$ for some uniformly small $r_0$. Therefore,
\begin{equation}
    \begin{split}
       \sum_{i=1}^N \mathrm{Vol}_{g_0}\left(B_0(x_i,r_0)\right) &\leq \sum_{i=1}^N \mathrm{Vol}_{g_0}\left(B_t(x_i,\frac18)\right)\\
        &\leq \mathrm{Vol}_{g_0}\left(B_t(y,1)\right)\\
        &\leq \mathrm{Vol}_{g_0}\left(B_0(y,2)\right).
    \end{split}
\end{equation}
Since $x_i\in B_t(y,1)\subset B_0(y,2)$, the estimates on $N$ then follows from $\Ric(g_0)$ lower bound and volume comparison. The desired result follows by re-labelling the constants. 
\end{proof}
\section{Gap Theorem of Ricci solitons}\label{gapsection}
In this section we will prove Theorem \ref{gap thm for soliton}, a gap theorem for shrinking and steady gradient Ricci solitons. We do not assume a-priori bounds on the curvature. The novel idea is to  obtain local curvature control under the small $L^{n/2}$ curvature and local entropy bound (see also \cite{GeJiang-2017}). We first prove the following result, from which Theorem \ref{gap thm for soliton} shall follow.
\begin{thm}
\label{local curvature}
For all $A\geq 1000n$, there is $\e(n,A),C(n,A), \hat T(n,A)>0$ such that the following holds. Suppose $(M,g(t))$ is a Ricci flow on $[0,T]$ and $p\in M$ be a point such that for all $t\in (0,T]$,
\begin{enumerate}
\item $B_t(p,1)\Subset M$;
\item $\left(\int_{B_t(p,4A\sqrt{t})}|\Rm|^{n/2}d\mu_t\right)^{2/n}\leq \e_0$ for some $\e_0<\e$;
\item $\nu(B_t(p,4A\sqrt{t}),g(t),t)\geq -A$;
\end{enumerate}
then we have 
\begin{equation}
    \left\{
    \begin{array}{ll}
         &  |\Rm|(x,t)\leq C(n,A)\e_0 t^{-1}\\
         & \mathrm{inj}(x,t)\geq C(n,A)^{-1}\sqrt{t}
    \end{array}
    \right.
\end{equation}
 for all $x\in B_t(p,\frac14 A\sqrt{t}),t\leq T\wedge \hat T$.
\end{thm}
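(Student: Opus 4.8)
The plan is to argue by contradiction, using a Perelman-type point selection to replace the global curvature bound that drives the proof of Proposition~\ref{curv-esti}, and then running the $L^{n/2}$-energy estimate of Lemma~\ref{Lemma:Lp pres} together with the entropy-to-Sobolev mechanism behind Lemma~\ref{Lemma-Sobo}, exactly as in the proof of Theorem~\ref{pseudo}. Observe first that (1)--(3) and the conclusion are invariant under the parabolic rescaling $g(t)\mapsto Qg(Q^{-1}t)$, since the balls occur at scale $\sqrt t$ and the entropy at scale $t$. Fix the constant $C=C(n,A)$ in the conclusion to be large relative to the Moser constant $C_{\mathrm{Mos}}(n,A)$ produced below; if the theorem failed for every choice of $\e(n,A),\hat T(n,A)$, one obtains Ricci flows $(M_k,g_k(t),p_k)$ on $[0,T_k]$ satisfying (1)--(3) with $\e_{0,k}\to 0$, and points $x_k\in B_{t_k}(p_k,\tfrac14 A\sqrt{t_k})$ with $t_k\le T_k\wedge\hat T_k$, $\hat T_k\to 0$, and $|\Rm_{g_k}|(x_k,t_k)>C\e_{0,k}t_k^{-1}$.

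The next step is the point selection. Inside the spacetime cone $\{(x,t):0<t\le t_k,\ x\in B_t(p_k,\rho\sqrt t)\}$, for a radius $\rho$ between $\tfrac14 A$ and $4A$ — where on each fixed time slice $|\Rm_{g_k}(\cdot,t)|$ is bounded by precompactness (1), with a bound degenerating as $t\downarrow 0$ but dominated by $C\e_{0,k}t^{-1}$ — I would select, Perelman-style (weighting $|\Rm|$ against the distance to the lateral boundary of the cone), a spacetime point and a scale so that, after rescaling the flow to put that scale equal to one, the rescaled flow $\tilde g_k$ enjoys on a backward parabolic region $\mathcal P_k$ of size bounded below independently of $k$: (i) $\mathcal P_k$ lies in the region $\{x\in B_t(p_k,\rho\sqrt t)\}$ where the curvature is controlled; (ii) $|\Rm_{\tilde g_k}|\le C\e_{0,k}t^{-1}\le 2$ on $\mathcal P_k$, while at the selected point $\bar x_k$ (at rescaled time $1$, at distance $O_{n,A}(1)$ from $p_k$) one has $|\Rm_{\tilde g_k}|(\bar x_k,1)\ge c_1(n,A)\,C\e_{0,k}$; (iii) the scale-invariant hypotheses persist, $\nu(B_t(p_k,4A\sqrt t),\tilde g_k(t),t)\ge -A$ and $\big(\int_{B_t(p_k,4A\sqrt t)}|\Rm_{\tilde g_k}|^{n/2}d\mu_t\big)^{2/n}<\e_{0,k}$, with $B_t(p_k,4A\sqrt t)\Subset M_k$ once $t$ is small — this is where $\hat T(n,A)$ must be taken small, via distance distortion under the flow, so that these balls sit inside the precompact $B_t(p_k,1)$. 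The gap between $4A$ and $\tfrac14 A$ in the hypotheses, together with the weighting, is exactly what keeps $\mathcal P_k$ of definite size even though the cone tapers backward in time.

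On $\mathcal P_k$ the curvature is at most $2$, so \cite[Theorem 3.3]{Wang-2018} and the entropy bound give $\kappa(n,A)$-noncollapsing there, while the local $\nu$-entropy bound yields — via the log-Sobolev inequality read off from \eqref{def of local nu}, applied to test functions supported in the inner region where the scalar curvature is now bounded below by a small constant, together with the Davies--Ye semigroup argument of Theorems~\ref{sgp L2L1 est} and~\ref{L2 sgp est to Sob ineq}, exactly as in the proof of Lemma~\ref{Lemma-Sobo} — a uniform Sobolev inequality with constant depending only on $n$ and $A$. Using Kato's inequality and $\Box|\Rm|\le 8|\Rm|^2$, I would then iterate Lemma~\ref{Lemma:Lp pres} from $\a=n/4$ upward, testing against Perelman's spacetime cutoff $\phi=e^{-ct}\varphi(\cdot)$ adapted to $\mathcal P_k$ so that $\Box\phi\le 0$ (cf.\ \cite[Lemma 8.3]{Perelman-2002}) and absorbing the supercritical term $\int\phi^2(|\Rm|^2+\b)^{\a+1/2}d\mu_t$ by H\"older together with the Sobolev inequality and the smallness $\|\Rm\|_{L^{n/2}}<\e_{0,k}$. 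This is the Moser iteration behind Proposition~\ref{curv-esti}, now carried out by the energy method so as to tolerate the moving support; it produces $|\Rm_{\tilde g_k}|(\bar x_k,1)\le C_{\mathrm{Mos}}(n,A)\,\e_{0,k}$, which contradicts (ii) once $C$ was chosen with $c_1(n,A)\,C>C_{\mathrm{Mos}}(n,A)$ and $k$ is large. Undoing the rescaling gives $|\Rm|(x,t)\le C(n,A)\e_0 t^{-1}$ on $B_t(p,\tfrac14 A\sqrt t)$ for $t\le T\wedge\hat T$, and the injectivity radius bound follows from this curvature bound together with the noncollapsing by \cite{CheegerGromovTaylor-1982}.

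The main obstacle I expect is the point selection of the second paragraph. Because (2)--(3) are imposed only on the single moving ball $B_t(p,4A\sqrt t)$ — rather than on every $\sqrt t$-ball, as in Proposition~\ref{curv-esti} — the region on which curvature can be controlled shrinks backward in time, which is in tension with the backward-parabolic nature of both the Moser iteration and the energy estimate; arranging the Perelman-type weighted selection so that the rescaled good region $\mathcal P_k$ still contains a neighborhood of the selected point of size bounded below, with all constants depending only on $n,A$ and all relevant balls inside $B_t(p,1)$, is the crux. A secondary technical point is producing the local Sobolev inequality from the \emph{local} entropy bound in the absence of any assumed scalar lower bound; this is dealt with by passing to the scale on which $\mathcal P_k$ is of order one, where the curvature there is of size $\e_{0,k}$ and hence the scalar term is harmless.
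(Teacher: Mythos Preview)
Your overall strategy---contradiction, point selection in the spacetime cone, entropy-to-Sobolev, then an $L^{n/2}$ Moser argument---is close in spirit to the paper's, and you correctly identify the crux as the point selection inside the shrinking cone $\{x\in B_t(p,\rho\sqrt t)\}$. However, that step is not actually carried out in your proposal: you describe what the selected point should satisfy, not a mechanism that produces it. In particular, the assertion that after rescaling one has $|\Rm_{\tilde g_k}|\le C\e_{0,k}t^{-1}$ on a backward parabolic region of definite size is circular as written. To compare $B_s(p,\rho\sqrt s)$ at different times $s$ near the selected time you need Ricci control along geodesics from $p$ to the selected point at those earlier times, but that control is only available \emph{inside} the (shrinking) cone where the bound is assumed---so you cannot conclude the selected point stays in the cone without already knowing it does. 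Your own final paragraph anticipates exactly this difficulty, but does not resolve it.

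The paper's proof handles the crux differently and in three explicit steps. First, it imposes an auxiliary assumption $\star$: $\Ric\le t^{-1}$ on the small ball $B_t(p,\sqrt t)$; this is just enough to run Hamilton--Perelman distance distortion with $r_0=Q^{-1/2}$, decoupling the distance control from the yet-unproven curvature estimate. Under $\star$, the point selection is not a Perelman weighted-curvature argument but a \emph{curvature-radius} argument: one sets $r(x,t)=\sup\{r<A\sqrt t-d_t(x,p):|\Rm|\le r^{-2}\text{ on }P(x,t,r)\}$ and $F(x,t)=r(x,t)/(A\sqrt t-d_t(x,p))$, and shows $F\ge c_0(n,A)$ by contradiction. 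If $F(x_i,t_i)\to 0$, rescaling by $Q_i=r(x_i,t_i)^{-2}$ and using minimality of $F$ gives uniform curvature-radius lower bounds on arbitrarily large regions; Hamilton compactness then produces a complete limit with $\int|\Rm|^{n/2}=0$ (by the hypothesis with $\e_i\to 0$) and entropy $\ge -2A$, hence Euclidean, contradicting $\tilde r(x_\infty,0)=1$. Note this first step uses compactness to a flat limit rather than Moser iteration. Step 2 removes $\star$ by a continuity argument: if $\tilde T$ is the maximal time up to which $\star$ holds, the rough estimate from Step 1 is valid on $[0,\tilde T]$, and then a single Moser iteration at the boundary point forces $1=|\Ric|(\bar x,\tilde T)\cdot \tilde T\le C(n,A)\e_0$, impossible for small $\e_0$. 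Step 3 is your Moser argument, now run with the rough bound from Steps 1--2 in hand, to upgrade the coefficient from $C(n,A)$ to $C(n,A)\e_0$.

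So the genuine gap is twofold: (a) you need a concrete point-selection quantity (the paper's $F$) whose minimality automatically propagates curvature control backward in the cone, and (b) you need an a priori Ricci bound near $p$ (the paper's $\star$) to break the circularity in the distance-distortion step, removed afterward by continuity plus one Moser step. Your direct-Moser route for the contradiction in place of the compactness limit is plausible once (a) and (b) are in place, but as written the argument does not close.
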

\begin{proof}We will split the proof into three parts. 

{\bf Step 1. Rough estimates under stronger assumption.} We first prove the rough curvature estimate: $|\Rm(x,t)|\leq C(n,A)t^{-1}$ on $B_t(p,\frac12 A\sqrt{t})$ under an extra assumption: 
$$\star\quad \quad  \Ric(x,t)\leq t^{-1},\;\;\text{on}\;\; B_t(p,\sqrt{t}),t\in (0,T].$$

The injectivity radius estimates will follow from the work of \cite{CheegerGromovTaylor-1982} by \cite[Theorem 3.3]{Wang-2018}. For $x\in B_t(p,A\sqrt{t}), t<T$, we define 
\begin{equation}
r(x,t)=\sup \left\{0<r<A\sqrt{t}-d_t(x,p): \sup_{P(x,t,r)}|\Rm|\leq r^{-2} \right\}
\end{equation}
where $P(x,t,r)=\{ (y,s): y\in B_s(x,r),s\in [t-r^2,t]\cap (0,T]\}$. We claim that there are $\e,c_0,T_0>0$ depending only on $n,A$ such that if assumptions  hold for $\e_0<\e$, then for all $x\in B_t(p,A\sqrt{t})$ and $t<T\wedge T_0$,
\begin{equation}
F(x,t)=\frac{r(x,t)}{A\sqrt{t}-d_t(x,p)} \geq c_0.
\end{equation}
The rough curvature estimate then follows immediately from the claim since for any $x$ $\in B_t(p,\frac12 A\sqrt{t})$ and $t<T\wedge T_0$,
\be
\begin{split}
|\Rm|(x,t)&\leq \frac{1}{r^2(x, t)}\\
&\leq \frac{1}{c_0^2(A\sqrt{t}-d_t(p,x))^2}\\
&\leq \frac{4}{c_0^2A^2t}.
\end{split}
\ee

{ 

Suppose on the contrary that the claim is not true for some $A$ and $n$, we can find a sequence of Ricci flow $\{(M_i,g_i(t),p_i)\}_{i=1}^\infty$, $t_i,\e_i\rightarrow 0$ such that 
\begin{itemize}
\item $\Ric_i(x,t)< t^{-1}$ for all  $x\in B_t(p_i,\sqrt{t})$, $t<t_i$;
\item $\displaystyle\int_{B_{g_i(t)}(p_i,4A\sqrt{t})}|\Rm_i|^{n/2} d\mu_{i,t}\leq  \e_i$ for all $t<t_i$;
\item $\nu(B_{g_i(t)}(p,4A\sqrt{t}),g_i(t),t)\geq -2A$ for $t<t_i$,
\end{itemize}
but for some sequence $x_i\in B_t(p_i,A\sqrt{t})$, we have
\begin{equation}\label{lower bdd of Fi}
F_i(x_i,t_i)=\min \{F_i(y,s): s\in (0,t_i), y\in B_s(p_i,A\sqrt{s})\} \rightarrow 0.
\end{equation}
Re-scale the flow by $\tilde g_i(t)=Q_i g_i(t_i+Q_i^{-1}t)$, $-Q_it_i\leq t\leq 0$ where $Q_i^{-1/2}=r_i(x_i,t_i)$ so that $\tilde r_i(x_i,0)=1$. Then by \eqref{lower bdd of Fi}
\begin{equation}
\begin{split}
d_{\tilde g_i(0)}(x_i,\partial B_{ g_i(t_i)}(p_i,A\sqrt{t_i}))
&=\frac{d_{ g_i(t_i)}(x_i,\partial B_{ g_i(t_i)}(p_i,A\sqrt{t_i}))}{r(x_i,t_i)}\\
&\geq \frac{A\sqrt{t_i}-d_{t_i}(x_i,p_i)}{r(x_i,t_i)}\\
&=F_i(x_i,t_i)^{-1}\\
&\rightarrow +\infty.
\end{split}
\end{equation}
That is to say the pointed Cheeger-Gromov limit of the flow centred at $x_i$ is complete provided it exists. Furthermore, we may invoke \eqref{lower bdd of Fi} again to see that
\begin{equation}\label{QT to infty}
\begin{split}
Q_it_i&=\frac{t_i}{r_i(x_i,t_i)^2}\\
&> \frac{1}{A^2}\left(\frac{A\sqrt{t_i}-d_{t_i}(x_i,p_i)}{r_i(x_i,t_i)}\right)^2\\
&\rightarrow +\infty.
\end{split}
\end{equation}
Next, we would like to show that after passing to a sub-sequence, the flows converge in Cheeger-Gromov sense. The two key ingredients are uniform curvature bound in $i$ on compact sets in spacetime and the injectivity radius lower bound at $x_i$ w.r.t. $\tilde g_i(0)$. 

  Let $r>0$ and $y\in \tilde B_{s}(x_i,r), s\in (-1,0]$. 
Using $r_i(x_i,t_i)=Q_i^{-1/2}<<\sqrt{t_i}$ and assumption $\star$, we apply Hamilton-Perelman's distance estimates \cite[Lemma 8.3]{Perelman-2002} (see also \cite{Hamilton-1995}) with $r_0=Q_i^{-1/2}$ and $K=Q_i$ so that 
\begin{equation}\label{dist distortion estimate}
\begin{split}
d_{Q_i^{-1}s+t_i}(x_i,p_i)&\leq d_{t_i}(x_i,p_i)+C_n Q_i^{1/2}\cdot (t_i-Q_i^{-1}s-t_i)\\
&\leq d_{t_i}(x_i,p_i)+C_n Q_i^{-1/2}.
\end{split}
\end{equation}

Hence
\be\label{dist y estimate}
\begin{split}
d_{Q_i^{-1}s+t_i}(y,p_i)&\leq Q_i^{-\frac 12}r+ d_{Q_i^{-1}s+t_i}(x_i,p_i)\\
                                     &\leq C_n Q_i^{-\frac 12}r+d_{t_i}(x_i,p_i)
\end{split}
\ee
 It follows from \eqref{lower bdd of Fi} and \eqref{QT to infty} that for all large $i>N(n, A, r)$, 
\be\label{dist y estimate 2}
\begin{split}
A\sqrt{Q_i t_i+s}-Q_i^{\frac{1}{2}}d_{t_i}(x_i,p_i) &\geq A\sqrt{Q_i t_i-1}-Q_i^{\frac 12}d_{t_i}(x_i,p_i)\\
                        &\geq A\sqrt{Q_it_i}-Q_i^{\frac 12}d_{t_i}(x_i,p_i)-A\\
&= F(x_i, t_i)^{-1}-A\\
&> C_nr.
\end{split}
\ee
Hence by  \eqref{dist y estimate}, $y\in B_{Q_i^{-1}s+t_i}(p_i, A\sqrt{Q_i^{-1}s+t_i})$. We have by  \eqref{lower bdd of Fi}, \eqref{dist y estimate} and \eqref{dist y estimate 2} that
\be
\begin{split}
\tilde r_i(y,s)&=\frac{r_i(y,Q_i^{-1}s+t_i)}{r_i(x_i,t_i)}\\
&=\frac{F_i(y,Q_i^{-1}s+t_i)}{F_i(x_i,t_i)}\cdot \frac{A\sqrt{Q_i^{-1}s+t_i}-d_{Q_i^{-1}s+t_i}(y,p_i)}{A\sqrt{t_i}-d_{t_i}(x_i,p_i)}\\
&\geq  \frac{A\sqrt{Q_i^{-1}s+t_i}-d_{Q_i^{-1}s+t_i}(y,p_i)}{A\sqrt{t_i}-d_{t_i}(x_i,p_i)}\\
&\geq \frac{A\sqrt{Q_i^{-1}s+t_i}-d_{Q_i^{-1}s+t_i}(x_i,p_i)-Q_i^{-1/2}r}{A\sqrt{t_i}-d_{t_i}(x_i,p_i)}\\
&\geq \frac{F(x_i, t_i)^{-1}-A-C_nr}{F(x_i, t_i)^{-1}}.
\end{split}
\ee
Thus for all $y\in \tilde B_s(x_i,r),s\in (-1,0]$, $r>0$ and $i>N(n, A, r)$, we have 
\begin{equation}\label{curvature radius lower bdd}
\tilde r_i(y,s)>\frac12.
\end{equation}

This gives the curvature estimates on any compact subset in space-time. By our assumptions, for any $r>0$ and $i>N(n,A, r)$ , the entropy satisfies

\be\label{entropy lower bdd 2}
\begin{split}&\quad \nu(\widetilde{B}_t(x_i, r),\tilde{g}_i(t),Q_it_i+t)\\&\geq
\nu(\widetilde{B}_t(p_i,4A\sqrt{Q_it_i+t}),\tilde{g}_i(t),Q_it_i+t)\\
&=\nu(B_t(p_i,4A\sqrt{t_i+Q_i^{-1}t}),g_i(t_i+Q_i^{-1}t), t_i+Q_i^{-1}t)\\
                                       &\geq -2A.\\
\end{split}
\ee

By virtue of \eqref{QT to infty}, \eqref{curvature radius lower bdd}, \eqref{entropy lower bdd 2} and \cite[Theorem 3.3]{Wang-2018}, the volume ratios $\frac{\widetilde V_0(\widetilde{B}_0(x_i, r))}{ r^n}$ are uniformly bounded from below in $i$ for any all $r$ $\in (0,1/2]$. Thanks to \eqref{curvature radius lower bdd} and Cheeger-Gromov-Taylor injectivity radius estimate \cite{CheegerGromovTaylor-1982}, the injectivity radius at $x_i$ w.r.t. $\tilde g_i(0)$ have a uniform positive lower bound in $i$. Hence by Hamilton's compactness theorem (see \cite{Hamilton-1995.2}, \cite{Chowetal-2007}), we can pass $\tilde g_i(t)$ to a complete limiting Ricci flow $(M_\infty,\tilde g_\infty(t),x_\infty),\, t\in (-1,0]$ which is a complete solution with bounded curvature. By the choice of $Q_i^{-1/2}$ and \eqref{dist y estimate}, we have $B_{t_i}(x_i,Q_i^{-1/2})\Subset B_{t_i}(p_i,A\sqrt{t_i})$. Therefore,
for all $ -1<s<0$, if $d_{Q_i^{-1}s+t_i}(x_i,y)<r$, then by \eqref{dist distortion estimate}
\be
\begin{split}
d_{Q_i^{-1}s+t_i}(y,p_i)&\leq d_{Q_i^{-1}s+t_i}(y,x_i)+d_{Q_i^{-1}s+t_i}(x_i,p_i)\\
&\leq r+d_{t_i}(x_i,p_i)+C_n Q_i^{-1/2}\\
&\leq  r+2 A\sqrt{t_i}.
\end{split}
\ee
Therefore, if $r= \sqrt{t_i}$, then we have $$B_{Q_i^{-1}s+t_i}(x_i,\sqrt{t_i})\Subset B_{Q_i^{-1}s+t_i}(p_i,3A\sqrt{Q_i^{-1}s+t_i})$$ for all $i\rightarrow +\infty$. This together with the assumption implies 
\begin{equation}\label{limit integral}
\int_{M_\infty} |\widetilde \Rm|^{n/2} d\tilde\mu_s \equiv 0
\end{equation}
for all $-1<s\leq 0$  and hence is a flat solution. Moreover, by the monotonicity of local entropy over domain, \eqref{entropy lower bdd 2} and the proof of Lemma 6.28 in \cite{Chowetal-2007}, we have 
\begin{equation}
\begin{split}\label{limit entropy lower bdd}
\nu(M_\infty,\tilde{g}_\infty(t))\geq -2A
\end{split}
\end{equation}
for all $-1<t\leq 0$. Recall that we have $\tilde r_i(x_i,0)=1$. 
As the entropy is bounded from below for all scales, the manifolds must be of maximum volume growth which implies that $\tilde{g}_\infty(t)$ is the static flat Euclidean metric. This contradicts with the curvature radius at $(x_\infty,0)$ and completes the proof under the assumption $\star$. Now the injectivity radius estimate follows from the curvature estimate and the work of \cite{CheegerGromovTaylor-1982}.
}
\bigskip 

{\bf Step 2. Removing assumption $\star$ in Step 1.} Since $B_t(p,1)\Subset M$ for $t\leq T$, by smoothness of solution we may find $\tilde T\leq T$ such that $|\Ric|<t^{-1}$ for $x\in B_t(p,\sqrt{t}),\; t\in (0,\tilde T)$.    W.L.O.G., we may assume that $\tilde T$ to be small uniformly, otherwise the required estimate on $|\Rm|$ follows by Step 1. 
Hence the result under $\star$ gives the curvature estimates over a smaller ball, i.e. for some $\hat T(n, A)$, 
\be\label{C0 Rm bdd in prop}
|\Rm|(x,t)\leq C(n,A)t^{-1}
\ee
for all $x\in B_t(p,\frac12 A\sqrt{t})$, $t\leq \min\{\tilde T, \hat T(n, A)\}$.

We claim that $\tilde T\geq T\wedge  \hat T(n, A)$. Suppose that is not the case, denote $s=\tilde T$, then by the maximality of $\tilde T$ there is $\bar x\in \overline{B_s(p,\sqrt{s})}$ such that $|\Ric|(\bar x,s)=s^{-1}$. By considering the flow $ s^{-1}g(st),t\in [0,1]$, we may wlog assume $s=1$. 
By the estimates of $\mathrm{inj}(x,t)$, \eqref{C0 Rm bdd in prop}, Theorem 3.3 in \cite{Wang-2018}, $\mathrm{Vol}_{g(s)}\left(B_s(\bar x,\frac14 A\sqrt{s})\right)$ 
is uniformly bounded from below  for any $s$ $\in [1/2, 1]$. Together with a result of Saloff-Coste \cite{Saloff-Coste-1992}, we get a uniform Sobolev inequality on $B_s(\bar x, \frac14A\sqrt{s})$ for any $s$ $\in [1/2, 1]$.
Then the Moser iteration argument \cite[Chapter 19]{PLi-2012} on $B_s(\bar x, \frac{1}{4}A\sqrt{s})$  and the H\"{o}lder inequality would imply 
\be\label{MVI 1}
\begin{split}
1&=|\Ric|(\bar x, 1)\\
&\leq c(n) |\Rm|(\bar x, 1)\\
&\leq C'(n,A)\left(\int^1_{1/2}\fint_{B_s(\bar x, \frac14A\sqrt{s})} |\Rm|^{n/2} d\mu_s ds \right)^{2/n}\\
&\leq C''(n,A) \e_0.
\end{split}
\ee
which is impossible if $\e_0\leq \e(n, A)$ is sufficiently small. Hence $\tilde T\geq T\wedge  \hat T(n, A)$. This implies the curvature estimate for $|\Rm|$ on $B_t(p,\frac14 A\sqrt{t})$ by Step 1. 

\bigskip 
{\bf Step 3. Improved curvature estimates.}
At this point we have already obtained a rough curvature estimate on $B_t(p,\frac12 A\sqrt{t}),t\in [0,T\wedge \hat T]$. For each $s\in [0,T\wedge \hat T]$, we may consider $\tilde g(t)=s^{-1}g(st),t\in [0,1]$. Since we have curvature bound on $[\frac12,1]$ and entropy lower bound, with the scaling invariant $L^{n/2}$ assumption we can apply iteration \cite{PLi-2012} again to show that 
\begin{equation}\begin{split}
&\quad|\Rm(\tilde g(x,1))|\\
&\leq C(n,A)\left(\fint^1_{1/2} \fint_{B_{\tilde g(s)}(x,\frac14 A\sqrt{s})} |\Rm(\tilde g(t))|^{n/2} d\mu_s ds \right)^{2/n}\\
&\leq C(n,A)\e_0.
\end{split}
\end{equation}
This gives an improved coefficient on curvature decay by rescaling it back to $g(t)$.
\end{proof}

We now show how our gap theorem for complete shrinking and steady gradient Ricci solitons with small $||\Rm||_{L^{n/2}}$, Theorem \ref{gap thm for soliton}, follows from Theorem \ref{local curvature}. Recall that a complete Riemannian manifold $(M, g)$ is said to be a shrinking (steady) gradient Ricci soliton if there exists a smooth function $f$ such that
\be\label{RS eqn}
\Ric+\nabla ^2 f=\lambda g,
\ee
where the constant $\lambda=\frac{1}{2}$ ($=0$ resp.).


\begin{proof}[Proof of Theorem \ref{gap thm for soliton}]
Let $\lambda =1/2$ or $0$ be the constant as in \eqref{RS eqn}. We consider the flow $\phi_t$ of the vector field $\frac{\nabla f}{1-2\lambda t}$ with $\phi_0$ being the identity map. it is known that $g(t):= (1-2\lambda t)\phi_t^*g$ is an  ancient solution to the Ricci flow on $M$ with $g(0)=g$ and $t$ $\in (-\infty, \frac{1}{2\lambda})$ ($=\mathbb{R}$ if $\lambda=0$, see \cite{{Chowetal-2007}, {Zhang-2009}}). By the reparametrization and the scaling invariance of Conditions $1$ and $2$ in Theorem \ref{gap thm for soliton}, we have for all $t$ $\in (-\infty, \frac{1}{2\lambda}):$
\begin{enumerate}
    \item $\nu(M,g(t))\geq -A$;
    \item $\int_M |\Rm|^{n/2}_{g(t)} d\mu_{g(t)} \leq \e$.
\end{enumerate}
We are going to show something slightly more general, namely if $(M, g(t))$ is a complete ancient solution to the Ricci flow on $(-\infty, 0]$ such that $g(t)$ satisfies the above two conditions for each $t$ $\in (-\infty, 0]$, then $(M, g(t))$ is isometric to $\mathbb{R}^n$. For any $Q>1$ and $\tau\leq 0$, we consider the rescaled solution $h(t):=(\frac{Q}{\hat T})^{-1}g(\frac{Q}{\hat T} t-Q+\tau)$, where $t$ $\in [0,\hat T ]$ and $\hat T$ is the constant as in Theorem \ref{local curvature}. It is not difficult to see that $h(t)$ also satisfies the two conditions in Theorem \ref{gap thm for soliton}. Hence we may apply Theorem \ref{local curvature} for all sufficiently small $\e$ to get for any $x$ $\in M$
\begin{eqnarray*}
Q |Rm|_g(x, \tau)&=& \hat T|Rm|_h(x, \hat T)\\
&\leq& C(n, A)\e.
\end{eqnarray*}
By letting $Q$ $\to \infty$, we have $g(\tau)$ is flat. The entropy lower bound at all scales then implies the maximal volume growth of $g(\tau)$ and thus it is isometric to $\mathbb{R}^n$.
\end{proof}

\section{Gap theorem with small $||\Rm||_{L^{n/2}}$}\label{gapriccisection}

In this section, we will use Ricci flow to discuss Riemannian manifolds with $\Ric\geq 0$ and with small $||\Rm||_{L^{n/2}}$ which are non-collapsed in term of entropy. We first show that under the assumption of Corollary~\ref{Corollary-diff}, we have a long-time solution of the Ricci flow and $g_0$ has maximal volume growth. 
\begin{thm}\label{Theorem-LT}
For any $A>0$, there is $\sigma(n,A),C_1(n,A)>0$ such that the following holds. Suppose $(M,g_0)$ is a complete non-compact Riemannian manifold with bounded curvature such that 
\begin{enumerate}
    \item $\Ric(g_0)\geq 0$;
    \item $\nu(M,g_0)\geq -A$;
    \item $\left(\int_M |\Rm(g_0)|^{n/2} d\mu_{g_0}\right)^{2/n} \leq \e$ for some $\e<\sigma$.
\end{enumerate}
Then there is a Ricci flow $g(t)$ starting from $g_0$ on $M\times [0,\infty)$ such that for all $t>0$, 
\begin{equation}
    \left\{
    \begin{array}{ll}
         & \sup_M |\Rm(g(t))|\leq C_1\e t^{-1}\\
         & \left(\int_M |\Rm(g(t))|^{n/2} d\mu_t\right)^{2/n} \leq C_1\e
    \end{array}
    \right.
\end{equation}
Moreover, $g_0$ is of maximal volume growth.
\end{thm}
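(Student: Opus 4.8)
The plan is to run the Ricci flow starting from $g_0$ and show that the hypotheses of Theorem \ref{pseudo} are met at scale $1$ after an initial rescaling, so that the curvature estimate $|\Rm(g(t))|\le C_0\e t^{-1}$ holds on a uniform time interval $(0,\hat T]$ independent of the scale; a standard parabolic-rescaling/continuation argument then upgrades this to a solution on all of $[0,\infty)$ with the stated decay, and a Bishop--Gromov argument applied at the flow's large-time slices yields maximal volume growth of $g_0$. First I would recall that since $(M,g_0)$ has bounded curvature, Shi's theorem gives a complete bounded-curvature solution $g(t)$ on a maximal interval $[0,T_{\max})$. Because $\Ric(g_0)\ge 0$ (so $\Ric\ge -\lambda$ with $\lambda=0$) and $\nu(M,g_0)\ge -A$ — and the latter, being a global entropy bound, dominates every local entropy $\nu(B_{g_0}(p,5),g_0,1)\ge -A$ by monotonicity over domains — hypotheses (a), (b), (c) of Theorem \ref{pseudo} hold with $\e<\sigma$. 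Applying Theorem \ref{pseudo} directly gives $|\Rm(g(t))|\le C_0\e/t$, $\mathrm{inj}_{g(t)}\ge C_0^{-1}\sqrt t$, and $\big(\int_{B_t(x,1)}|\Rm|^{n/2}\big)^{2/n}\le C_0\e$ for all $t\in(0,T_{\max}\wedge\hat T]$.

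Next I would promote the finite time $\hat T$ to infinity. The key observation is scale invariance: for any $R>0$, the rescaled flow $R^{-2}g(R^2 t)$ is again a complete bounded-curvature Ricci flow whose initial metric still satisfies $\Ric\ge 0$, $\nu\ge -A$ (entropy is scale invariant), and $\big(\int|\Rm|^{n/2}\big)^{2/n}\le\e$ (the $L^{n/2}$ norm of curvature in dimension $n$ is scale invariant). Hence Theorem \ref{pseudo} applies to every rescaling with the \emph{same} constants $C_0,\sigma,\hat T$, which forces the curvature estimate $|\Rm(g(t))|\le C_0\e/t$ to persist past any finite time and in particular precludes $T_{\max}<\infty$ (if $T_{\max}$ were finite, rescale so that $\hat T$ straddles $T_{\max}$; the uniform bound $|\Rm|\le C_0\e/t$ on $(0,T_{\max})$ together with the bounded-curvature theory extends the flow, contradicting maximality). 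So $g(t)$ exists on $[0,\infty)$ and $\sup_M|\Rm(g(t))|\le C_0\e\,t^{-1}$ for all $t>0$; relabelling $C_0$ as $C_1$ gives the first estimate, and the second, $\big(\int_M|\Rm(g(t))|^{n/2}d\mu_t\big)^{2/n}\le C_1\e$, follows from the semi-local $L^{n/2}$ bound of Theorem \ref{pseudo} by covering $M$ with unit balls and using the same covering count argument as in the proof of Theorem \ref{pseudo} (or directly from Lemma \ref{Lemma:Lp pres} integrated against a cutoff, letting the cutoff exhaust $M$).

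Finally, for maximal volume growth of $g_0$: since $\Ric(g_0)\ge 0$, Bishop--Gromov tells us $\mathrm{Vol}_{g_0}(B_{g_0}(p,r))/r^n$ is nonincreasing, so the asymptotic volume ratio $\mathrm{AVR}(g_0)=\lim_{r\to\infty}\omega_n^{-1}r^{-n}\mathrm{Vol}_{g_0}(B_{g_0}(p,r))$ exists in $[0,1]$; the claim is that it is positive. I would exploit that along the flow the volume of balls is controlled: with $\Ric(g(t))\ge -C_0\e/t$ (from $|\Rm|\le C_0\e/t$) and the non-collapsing $\mathrm{inj}_{g(t)}\ge C_0^{-1}\sqrt t$, Perelman's no-local-collapsing / the volume-ratio lower bound of \cite[Theorem 3.3]{Wang-2018} gives $\mathrm{Vol}_{g(t)}(B_{g(t)}(x,\sqrt t))\ge \kappa(n,A)(\sqrt t)^n$ uniformly. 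Comparing $g(t)$ with $g_0$ via the standard estimate $\partial_t\log\det g=-R$ together with the scalar curvature lower bound, and using distance distortion to relate $g(t)$-balls of radius $\sim\sqrt t$ to $g_0$-balls of radius $\sim\sqrt t$, one transfers this to $\mathrm{Vol}_{g_0}(B_{g_0}(p,\sqrt t))\ge c(n,A)(\sqrt t)^n$ for all large $t$; letting $t\to\infty$ gives $\mathrm{AVR}(g_0)\ge c(n,A)>0$, i.e. maximal volume growth. I expect the main obstacle to be this last step — carefully controlling the two-sided comparison between $g(t)$ and $g_0$ uniformly in $t$ (volume and distance distortion over a time interval on which the curvature bound degenerates like $1/t$), which requires integrating $\Ric(g(t))\ge -C_0\e/t$ over, say, $[1,t]$ and checking that the resulting distortion factor $t^{O(\e)}$ is absorbed by choosing $\e=\e(n,A)$ small; everything else is a routine assembly of Theorem \ref{pseudo}, scale invariance, and Bishop--Gromov.
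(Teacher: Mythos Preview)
Your overall architecture matches the paper's: rescale, apply Theorem~\ref{pseudo}, use scale invariance of the hypotheses to push $\hat T$ to infinity, then read off maximal volume growth from non-collapsing at time $\sim 1$ of the rescaled flow. Two points deserve correction or comparison.

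\textbf{Global $L^{n/2}$ bound.} Your ``cover $M$ with unit balls and use the covering count'' step does not work: $M$ is noncompact, so that cover is infinite, and the covering count in the proof of Theorem~\ref{pseudo} only bounds one unit ball by finitely many quarter-balls. The paper instead observes that your own rescaling already does the job: applying Theorem~\ref{pseudo} to $g_R(t)=R^{-2}g(R^2 t)$ yields $\big(\int_{B_{g_R(t)}(x,1)}|\Rm(g_R)|^{n/2}\big)^{2/n}\le C_1\e$, which after scaling back reads $\big(\int_{B_{g(t)}(x,R)}|\Rm(g(t))|^{n/2}\big)^{2/n}\le C_1\e$ for every $R>0$; now let $R\to\infty$.

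\textbf{Maximal volume growth.} Here the paper's execution differs from yours in a way that dissolves exactly the obstacle you flagged. Rather than working on the unrescaled flow and worrying about distortion over $[1,t]$ as $t\to\infty$, the paper stays on the rescaled flow $g_R$ at the \emph{fixed} time $1$: Wang's non-collapsing gives $\mathrm{Vol}_{g_R(1)}(B_{g_R(1)}(x,1))\ge c(n,A)$, and since $R_{g_0}\ge 0$ is preserved one has $d\mu_{g_R(1)}\le d\mu_{g_R(0)}$, while \cite[Corollary~3.3]{SimonTopping2016} gives $B_{g_R(1)}(x,1)\subset B_{g_R(0)}(x,2)$. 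Hence $c\le \mathrm{Vol}_{g_{R,0}}(B_{g_{R,0}}(x,2))=R^{-n}\mathrm{Vol}_{g_0}(B_{g_0}(x,2R))$, and letting $R\to\infty$ finishes. Your direct route can also be made to work, but your anticipated distortion factor $t^{O(\e)}$ is the wrong heuristic: the relevant Hamilton--Perelman distance estimate under $|\Rm|\le C\e/t$ gives $d_{g_0}\le d_{g(t)}+c_n\sqrt{\e}\int_0^t s^{-1/2}\,ds=d_{g(t)}+2c_n\sqrt{\e t}$, which is exactly the right scale and requires no smallness absorption.
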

{
\begin{rem}
 The assumption on the global entropy of all scale can also be implied by  maximal volume growth. 
\end{rem}}
\begin{proof}
For $R>0$, we let $g_{R,0}=R^{-2}g_0$ which still satisfies the assumptions of the Theorem, which are scaling invariant. Therefore we can run Shi's Ricci flow $g_{R}(t)$ \cite{Shi1989} for a short-time with initial metric $g_{R,0}$. By Theorem~\ref{pseudo}, if $\sigma$ is sufficiently small, $g_R(t)$ exists on $M\times [0,T(n,A)]$ and satisfies 
\begin{equation}
    \left\{
    \begin{array}{ll}
         &  |\Rm(g_R(t))|\leq C_1\e t^{-1}\\
         & \left(\int_{B_{g_R(t)}(x,1)}|\Rm(g_R(t))|^{n/2} d\mu_{R,t}\right)^{2/n} \leq C_1 \e 
    \end{array}
    \right.
\end{equation}
for all $(x,t)\in M\times [0,T]$. By re-scaling it back and the uniqueness of Ricci flow \cite{ChenZhu2006}, we obtain a Ricci flow $g(t)$ on $[0,T R^2)$ with $|\Rm|\leq C_1\e t^{-1}$ and $g(0)=g_0$. Moreover, we have for all $R,t>0$, 
\begin{equation}
    \left( \int_{B_t(x,R)} |\Rm(g(t))|^{n/2}d\mu_t\right)^{2/n}\leq C_1 \e.
\end{equation}
The global integral estimate then follows by letting $R\rightarrow +\infty$. 

To see that $g_0$ is of maximal volume growth, thanks to the improved regularity on curvature and monotonicity of entropy $\nu$, the re-scaled Ricci flow $g_R(t)$ satisfies 
\begin{eqnarray}
\mathrm{Vol}_{g_R(1)}(B_{g_{R}(1)}(x,1)) \geq c.
\end{eqnarray} 
Since the lower bound of scalar curvature is preserved along the Ricci flow, together with \cite[Corollary 3.3]{SimonTopping2016}, we have, if $\sigma$ is sufficiently small, that
\begin{equation}
\begin{split}
c&\leq \mathrm{Vol}_{g_{R}(1)}(B_{g_{R}(1)}(x,1))     \\
&\leq \mathrm{Vol}_{g_{R}(0)}(B_{g_{R}(0)}(x,2)) \\
&= \frac{\mathrm{Vol}_{g_0}(B_{g_0}(x,2R))}{R^n}.
\end{split}
\end{equation}
Since $R$ is arbitrarily large, this completes the proof.

\end{proof}

{ 
Before we prove the Corollary~\ref{Corollary-diff}, we will show that the asymptotic volume ratio can be improved to be almost Euclidean if we further shrink the integral curvature and hence is almost Euclidean in the sense of local entropy \cite[Lemma 4.10]{Wang-2020}. This is in spirit similar to the gap theorem proved by Cheeger \cite[Theorem 4.32]{Cheeger2003}. 
\begin{thm}\label{gap-entrop}
For all $A,\lambda,\delta>0$, there are $\sigma(n,A,\lambda,\delta), r(n,A,\lambda,\delta)>0$ such that if $(M,g)$ is a complete Riemannian manifold of bounded curvature so that for all $p\in M$, 
\begin{enumerate}
    \item $\Ric(g)\geq -\lambda$;
    \item $\nu\left(B_{g}(p,5),g,1 \right)\geq -A$;
    \item $\left(\int_{B_g(p,2)}|\Rm(g)|^{n/2} d\mu_{g}\right)^{2/n} <\sigma$.
\end{enumerate}
Then for all $p\in M$,
\begin{eqnarray}
\mathrm{Vol}_g\left(B_g(p, r)\right)\geq (1-\delta) \omega_n  r^n.
\end{eqnarray}
\end{thm}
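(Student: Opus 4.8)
The plan is to argue by contradiction and let the Ricci flow do the regularisation: Theorem~\ref{pseudo} furnishes, at a small fixed time, a metric whose curvature is of size $O(\e)$ at its natural parabolic scale, so that a blow-down is flat, and from the flat limit one reads off an almost-Euclidean volume bound which is then transported back to $t=0$. Concretely, suppose the assertion fails for some $A,\lambda,\delta>0$; with $r$ and $\sigma\le\sigma(n,A)$ (the threshold in Theorem~\ref{pseudo}) to be fixed below, there are complete manifolds $(M_i,g_i,p_i)$ of bounded curvature satisfying conditions $(1)$--$(3)$ with $\e_i:=\sup_{q\in M_i}\|\Rm(g_i)\|_{L^{n/2}(B_{g_i}(q,2))}\le\sigma_i\to0$ but with $\mathrm{Vol}_{g_i}(B_{g_i}(p_i,r))<(1-\delta)\,\omega_n r^n$. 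Running Shi's flow \cite{Shi1989} and invoking Theorem~\ref{pseudo}, each $g_i$ is the time-zero slice of a complete Ricci flow $g_i(t)$ on $[0,\hat T]$ with $\hat T=\hat T(n,A,\lambda)$ and, for $t\in(0,\hat T]$, $|\Rm(g_i(t))|\le C_0\e_i\,t^{-1}$ and $\mathrm{inj}_{g_i(t)}\ge C_0^{-1}\sqrt t$, where $C_0=C_0(n,A,\lambda)$. Fix a small $t_1\le\hat T$.

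Next, put $h_i:=t_1^{-1}g_i(t_1)$, so that $|\Rm(h_i)|\le C_0\e_i\to0$, $\mathrm{inj}_{h_i}\ge C_0^{-1}$, and all covariant derivatives of $\Rm(h_i)$ are uniformly bounded by interior estimates for the flow. By standard compactness (e.g.\ Hamilton's theorem \cite{Hamilton-1995.2}) a subsequence of $(M_i,h_i,p_i)$ converges smoothly in the pointed sense to a complete \emph{flat} $(M_\infty,h_\infty,p_\infty)$ with $\mathrm{inj}_{h_\infty}\ge C_0^{-1}$. Since $h_\infty$ is flat, $B_{h_\infty}(p_\infty,C_0^{-1})$ is isometric to a Euclidean ball of the same radius, so $\mathrm{Vol}_{h_\infty}(B_{h_\infty}(p_\infty,\rho))=\omega_n\rho^n$ for $\rho<C_0^{-1}$; by the smooth convergence and rescaling, $\mathrm{Vol}_{g_i(t_1)}(B_{g_i(t_1)}(p_i,\rho_0))\ge(1-o(1))\,\omega_n\rho_0^n$ with $\rho_0:=\tfrac12 C_0^{-1}\sqrt{t_1}$. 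As $\Ric(g_i(t_1))\ge-(n-1)C_0\e_i t_1^{-1}$ is almost nonnegative and $(n-1)C_0\e_i t_1^{-1}\rho_0^2=\tfrac14(n-1)\e_i/C_0\to0$, the Bishop--Gromov inequality upgrades the previous bound to the scale-free estimate
\[
\mathrm{Vol}_{g_i(t_1)}(B_{g_i(t_1)}(p_i,\rho))\ge(1-o(1))\,\omega_n\rho^n\qquad\text{for all }\rho>0 .
\]

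To transport this to $t=0$, observe that $R(g_i(0))\ge-n\lambda$ is preserved, hence $R(g_i(t))\ge-c_n\lambda$ on $[0,\hat T]$ and $\mathrm{Vol}_{g_i(0)}(S)\ge e^{-c_n\lambda t_1}\mathrm{Vol}_{g_i(t_1)}(S)$ for any fixed $S$; and the upper bound $\Ric(g_i(t))\le(n-1)C_0\e_i/t$, fed into Perelman's distance-distortion lemma \cite[Lemma 8.3]{Perelman-2002} with $r_0=\sqrt t$, gives $d_{g_i(0)}(p_i,x)\le d_{g_i(t_1)}(p_i,x)+\Gamma(n)\sqrt{t_1}$ for all $x$, i.e.\ $B_{g_i(t_1)}(p_i,s)\subset B_{g_i(0)}(p_i,s+\Gamma(n)\sqrt{t_1})$. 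Taking $S=B_{g_i(t_1)}(p_i,N\sqrt{t_1})$ and combining the last three facts,
\[
\mathrm{Vol}_{g_i(0)}\big(B_{g_i(0)}(p_i,(N+\Gamma(n))\sqrt{t_1})\big)\ge e^{-c_n\lambda t_1}(1-o(1))\,\omega_n N^n\, t_1^{n/2}.
\]
Choosing $N=N(n,\delta)$ large enough that $(1+\Gamma(n)/N)^n$ is close to $1$, then $t_1=t_1(n,\lambda,\delta)\le\hat T$ small enough that $e^{-c_n\lambda t_1}$ is close to $1$, and finally setting $r:=(N+\Gamma(n))\sqrt{t_1}$, the right-hand side exceeds $(1-\delta)\,\omega_n r^n$ for all large $i$; this contradicts the choice of $(M_i,g_i,p_i)$ and proves the theorem.

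The delicate step I expect to be the main obstacle is exactly this last transfer. The time-$t_1$ metric is controlled (almost flat) only at its own curvature scale $C_0^{-1}\sqrt{t_1}$, which may be much smaller than the unavoidable $O(\sqrt{t_1})$ error in comparing $g_i(0)$ with $g_i(t_1)$, so a naive comparison of balls of radius $C_0^{-1}\sqrt{t_1}$ is hopeless. The remedy is to first use Bishop--Gromov — legitimate precisely because pseudolocality forces $\Ric(g_i(t_1))$ to be almost nonnegative — to make the almost-Euclidean volume bound available at \emph{every} scale, and only then to compare at the large scale $N\sqrt{t_1}\gg\sqrt{t_1}$, where the additive distance error $\Gamma(n)\sqrt{t_1}$ and the multiplicative volume loss $e^{-c_n\lambda t_1}$ are both negligible; this is also why $r$ must be permitted to depend on $\delta$ (through $N$ and $t_1$) in addition to $n,A,\lambda$.
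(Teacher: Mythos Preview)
Your Bishop--Gromov step is applied in the wrong direction and this is a genuine gap. The inequality says that under $\Ric\ge-(n-1)K$ the ratio $\mathrm{Vol}(B(p,r))/V_{-K}(r)$ is \emph{nonincreasing} in $r$; hence a lower bound at the small scale $\rho_0=\tfrac12 C_0^{-1}\sqrt{t_1}$ yields only an \emph{upper} bound (not a lower bound) at larger radii. A flat manifold with $\mathrm{inj}\ge C_0^{-1}$ everywhere need not be $\mathbb{R}^n$ (think of $\mathbb{R}^{n-1}\times S^1$ with large circle), so your flat limit does not by itself give almost--Euclidean volume at scales $\gg C_0^{-1}$, and the remedy you propose for the transfer collapses.

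The paper handles this differently in two ways. First, in the contradiction argument it does \emph{not} fix a time $t_1$: it lets the violating time $t_i\to0$ as well as $\e_i\to0$, so that after rescaling by $t_i^{-1}$ the local entropy bound $\nu(B_{g_i(0)}(\cdot,5),g_i(0),1)\ge -A$ becomes an entropy lower bound at \emph{all} scales on the limit. The flat limit is then forced to be $(\mathbb{R}^n,g_{\mathrm{euc}})$, yielding the almost--Euclidean volume directly at the natural scale $\sqrt{t}$; Bishop--Gromov is then used only in the correct (downward) direction. Second, for the transfer to $t=0$ the paper exploits the $\e$--dependence of the curvature bound $|\Rm|\le C_0\e t^{-1}$ to get a distance distortion of order $c_n\sqrt{\sigma t}$ (optimising $r_0$ in Perelman's Lemma~8.3, equivalently \cite[Corollary~3.3]{SimonTopping2016}), which is small compared with $r=\tfrac12\sqrt{t}$. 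Your choice $r_0=\sqrt t$ gives only the cruder distortion $\Gamma(n)\sqrt{t_1}$; had you taken $r_0\sim\sqrt{t/\e_i}$, the distortion would be $O(\sqrt{\e_i t_1})\ll\rho_0$ and the ``hopeless'' direct comparison at scale $\rho_0$ would in fact succeed, without any appeal to Bishop--Gromov at large radii.
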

\begin{proof}
Let $g(t),t\in [0,\hat T]$ be the Ricci flow solution obtained from Theorem~\ref{pseudo} and Shi's Ricci flow \cite{Shi1989}. We claim that for given $\delta$, there are constants $T(n,A,\lambda,\delta)$ and $\hat \sigma(n,A,\lambda,\delta)$ such that if $\sigma<\hat \sigma$, then for $(x,t)\in M\times [0,T]$,
\begin{equation}\label{flow volume lower bdd}
    \mathrm{Vol}_{g(t)}\left(B_{g(t)}(x,\sqrt{t})\right) \geq (1-\delta)\omega_n t^{n/2}.
\end{equation}

Suppose on the contrary, we can find a sequence of $g_i(t),t\in [0,\hat T]$ 
such that $g_i(0)$ satisfies the same assumptions as in Theorem~\ref{pseudo} with $\e_i\rightarrow 0$ now but for some $\hat  x_i\in M_i$ and $0<\sqrt{t_i}\rightarrow 0$,
\begin{equation}\label{Con-pt}
   \mathrm{Vol}_{g_i(t_i)}\left(B_{g_i(t_i)}(\hat x_i,\sqrt{t_i})\right) < (1-\delta) \omega_n t_i^{n/2}. 
\end{equation}
We note here that since $\e_i\to 0$, the existence time $\hat{T}$ can be chosen to be independent of $i\to+\infty$ by Theorem~\ref{pseudo}.

Consider the rescaled Ricci flow $\tilde g_i(t)=t_i^{-1}g(t_i t)$ on $M_i\times [0,1]$. The original estimates from Theorem~\ref{pseudo} imply that for all sufficiently large $i$ and all $(x,t)\in M_i\times (0,1]$, 
\begin{eqnarray*}
|\Rm(\tilde g_i(t))|\leq C_0(n,A,\lambda)\e_i t^{-1}\quad\text{and}\quad \mathrm{inj}_{\tilde g_i(t)}\geq c_0(n,A,\lambda)\sqrt{t}
\end{eqnarray*}
which enable us to pass $(M_i,\tilde g_i(t),\hat x_i)$ to a  sub-sequential limit $(M_\infty,\tilde g_\infty(t),\hat x_\infty)$ in smooth Cheeger-Gromov sense by Hamilton's compactness \cite{Hamilton-1995.2}. In particular, $\tilde g_\infty(t)$ is flat for $t\in (0,1]$ since $\e_i\to 0$. On the other hand, since $t_i\rightarrow 0$, we may apply the local monotonicity of entropy in \cite[Theorem 5.4]{Wang-2018} again to show that $\nu(M_\infty,\tilde g_\infty(t))\geq -2A$ which implies $\tilde g_\infty(t)$ is of Euclidean volume growth by Theorem~\cite[Theorem 3.3]{Wang-2018} and hence $(M_\infty,\tilde g_\infty(1))\equiv (\mathbb{R}^n,g_{euc})$ which contradicts \eqref{Con-pt}.

 After relabelling the constants, \eqref{flow volume lower bdd}, together with volume comparison implies that for all $t\in [0,T]$ and $r<\sqrt{t}$, 
\begin{eqnarray}
\mathrm{Vol}_{g(t)}\left(B_{g(t)}(x,r)\right) \geq \left(1-\frac\delta{2}\right)\omega_n r^n.
\end{eqnarray}

 By the scalar curvature lower bound of $g(t)$ and \cite[Corollary 3.3]{SimonTopping2016},
\begin{equation}
    \begin{split}
    \mathrm{Vol}_{g_0}\left(B_{g_0}(x,r) \right) 
    &\geq e^{-n\lambda t}\cdot \mathrm{Vol}_{g(t)}\left(B_{g_0}(x,r )\right) \\
    &\geq e^{-n\lambda t}\cdot \mathrm{Vol}_{g(t)}\left(B_{g(t)}(x,r-c_n\sqrt{\sigma t}) \right) \\
    &\geq e^{-n\lambda t} \left(1-\frac\delta{2}\right)\omega_n \left( r-c_n\sqrt{\sigma t}\right)^n\\
    &\geq  \left(1-\delta\right)\omega_n r^n
    \end{split}
\end{equation}
if we choose $t,\sigma$ small enough and $r=\frac{1}2 \sqrt{t}$. This completes the proof.
\end{proof}

}

\begin{proof}[Proof of Corollary~\ref{Corollary-diff}]
 
Theorem~\ref{Theorem-LT} implies that $g_0$ is of maximal volume growth. By \cite[Theorem A.1.11]{CheegerColding1997} (see also \cite[Theorem 5.7]{Wang-2020}), it suffices to show that the asymptotic volume growth can be made arbitrarily close to the Euclidean one if we shrink $\sigma$. This follows from Theorem~\ref{gap-entrop} and the rescaling argument as in the proof of Theorem~\ref{Theorem-LT}.



Alternatively, we can also prove the homeomorphism by showing $M=\bigcup_{i=1}^\infty U_i$ where $U_i$ is diffeomorphic to a Euclidean ball and $U_i\subset U_{i+1}$ for all $i$ using the expansion of injectivity radius, curvature estimate $|\Rm(x,t)|\leq \e t^{-1}$ from Theorem~\ref{Theorem-LT}. Then the homeomorphism will follow from the main result of \cite{Brown1961}, see also \cite[Section 3]{ChenZhu-2003}. Notice that Gompf's result says that among
the Euclidean spaces only $\mathbb{R}^4$ has exotic differential structures. So for $n>4$, the homeomorphisms can be made to be diffeomorphisms (see \cite{Stallings-1962}). 
\end{proof}

\section{Regularity of Gromov-Hausdorff limit}\label{ghsection}
In this section, we discuss the compactness of Riemannian manifolds satisfying small $L^{n/2}$ bound. We remark here that the Gromov-Hausdorff limit follows from Ricci lower bound directly. The key part is to construct the differentiable structure on the limit using the pseudolocality of Ricci flows.

\begin{proof}
[Proof of Theorem~\ref{Theorem:GH}]
By Shi's Ricci flow existence \cite{Shi1989} and Theorem~\ref{pseudo}, by choosing $\e_0$ small enough we can find a sequence of Ricci flow $g_i(t)$ on $M_i\times [0,T(n,A)]$ such that 
\begin{enumerate}
    \item $\Ric(g_i(0))\geq -\lambda$;
    \item $\nu(B_{g_i(t)}(x,1),g_i(t),\frac1{32})\geq -2A$;
    \item $|\Rm(g_i(t))|\leq \frac{C\e_0}{t}$
\end{enumerate}
for all $(x,t)\in M_i\times (0,T]$. By \cite[Theorem 3.3]{Wang-2018} and \cite{CheegerGromovTaylor-1982}, we can apply Hamilton's compactness to pass  $(M_i,g_i(t),p_i)$ to $(M_\infty,g_\infty(t),p_\infty)$ for $t\in (0,T]$ in the smooth Cheeger-Gromov sense after passing to  sub-sequence. More precisely, there is an exhaustion $\{\Omega_i\}_{i=1}^\infty$ of $M_\infty$ and a sequence of diffeomorphism $F_i:\Omega_i\rightarrow M_i$ onto its image such that for any compact subset $\Omega\times [a,b]\Subset M_\infty\times (0,T]$, we have  $F_i^*g_i(t)\rightarrow g_\infty(t)$ in $C^\infty_{loc}(\Omega\times [a,b])$.
  
We now construct the Gromov-Hausdorff limit of $g_i$ using $F_i$ in a more precise way so that its relation to $M_\infty$'s topology is clearer. This essentially follows from the proofs of Gromov's compactness theorem and the distance distortion estimates. Since $M_\infty$ is a smooth manifold, we let $\{x_k\}_{k=1}^\infty$ be a countable dense set with respect to $g_\infty(1)$. Then for each $k,l$, we have $x_k,x_l\in B_{g_\infty(1)}(p_\infty,R_{k,l})$ and hence by distance distortion estimates \cite[Corollary 3.3]{SimonTopping2016} using curvature estimates above, we have 
\begin{equation}
    d_{F_i^*g_i}(x_k,x_l)\leq d_{F_i^*g_i(1)}(x_k,x_l)+ C_n\leq C(k,l)
\end{equation}
as $i\rightarrow +\infty$.
  Here we have used the fact that $F_i^*g_i(1)$ converges locally uniformly to $g_\infty(1)$. Therefore, $\lim_{i\rightarrow +\infty}d_{F_i^*g_i}(x_k,x_l)$ exists after we pass it to some sub-sequence which we denote it as $d_\infty(x_k,x_l)$. Repeating the process for each $k,l$, we define $d_\infty$ on the dense set. For general $x,y\in M_\infty$, we define $d_\infty(x,y)$ using the density of $\{x_k\}$. This is well defined since if there are two sequences $x_i,x_i'\rightarrow x\in M_\infty$ and $y_i,y_i'\rightarrow y\in M_\infty$ with respect to $g_\infty(1)$, then for $i$ sufficiently large,
  \begin{equation}
      \begin{split}
          d_\infty(x_i,y_i)&\leq   d_\infty(x_i',y_i')+d_\infty(x_i,x_i')+d_\infty(y_i,y_i')\\
          &\leq d_\infty(x_i',y_i')+ C\left(d_{g_\infty(1)}(x_i,x_i')\right)^{1/2}+C\left(d_{g_\infty(1)}(y_i,y_i')\right)^{1/2}\\
          &=d_\infty(x_i',y_i') +o(1).
      \end{split}
  \end{equation}
  by using \cite[Lemma 2.4]{{HuangKongRongXu2018}} and \cite[Corollary 3.3]{SimonTopping2016}. By passing $i\rightarrow +\infty$ and switching the sequences, we have the uniqueness of the limit. In other words, we have 
 \begin{equation}\label{GH-dis-conver}
     \lim_{i\rightarrow +\infty}d_{F_i^*g_i}(x,y)=d_\infty(x,y)
 \end{equation} 
for all $x,y\in M_\infty$.
  
  Now we claim that $d_\infty(\cdot,\cdot)$ is in fact a distance defined on $M_\infty\times M_\infty$. To see this, let $y,z\in M_\infty$ be such that $d_\infty(z,y)=0$. If $y\neq z$, then we have $d_{g_\infty(1)}(z,y)>r$ for some $r>0$. For any $\e>0$, we can find $y',z'\in \{x_i\}_{i=1}^\infty$ such that $d_{g_\infty(1)}(y,y')+d_{g_\infty(1)}(z,z')+ d_{\infty}(y',z')<\e$ and therefore we can find $N\in \mathbb{N}$ such that for $i>N$, $d_{F_i^*g_i}(y',z')<3\e$. Applying \cite[Lemma 2.4]{{HuangKongRongXu2018}} again, we deduce 
  \begin{equation}
      d_{F_i^*g_i(1)}(y',z')\leq C(n,\lambda) \e^{2/3}. 
  \end{equation}
  Here we note that although \cite[Lemma 2.4]{{HuangKongRongXu2018}} is stated globally, it is easy to see that the proof holds locally and only require the curvature bound in form of $\e t^{-1}$ for $\e$ small enough and an initial Ricci lower bound which is avaliable in our situation. Therefore, if $\e$ is sufficiently small, it will violate the fact that $d_{g_\infty(1)}(y,z)>r$. This shows that $d_\infty$ defines a distance metric on $M_\infty$.  
  
  To see that $d_\infty$ generates the same topology as $M_\infty$, it suffices to point out that \cite[Lemma 2.4]{{HuangKongRongXu2018}} together with a limiting argument implies that for $d_\infty(x,y)<1$,  we have 
  \begin{eqnarray}\label{GH ball}
  C_n^{-1}d_{g_\infty(1)}(x,y)^{3/2}\leq d_\infty(x,y)\leq C_nd_{g_\infty(1)}(x,y)^{1/2}
  \end{eqnarray}
  and hence all small open balls are uniformly comparable. 
  Moreover by \cite[Lemma 2.2]{LeeTam2020}, we also see that $\{B_{d_\infty}(p_\infty,k)\}_{k=1}^\infty$ is an exhaustion of $M_\infty$. By the construction, \eqref{GH-dis-conver}, and \eqref{GH ball}, the pointed Gromov-Hausdorff convergence is straight forward with $F_i$ being the Gromov-Hausdorff approximation on each compact set $\Omega\Subset M_\infty$. 
\end{proof}

\end{document}